\documentclass[a4paper,12pt]{article}
\usepackage{t1enc}
\usepackage[latin2]{inputenc}
\usepackage{amsfonts}
\usepackage{amsmath}
\usepackage{mathrsfs}
\usepackage{amssymb}
\usepackage{amsthm}
\usepackage{indentfirst}
\usepackage{enumitem}
\usepackage[a4paper,left=3cm,right=3cm,top=2.5cm,bottom=2.5cm]{geometry}

\title{A Tits alternative for topological full groups}
\author{N\'ora Gabriella Sz\H{o}ke\thanks{Institute of Mathematics, EPFL, Lausanne, Switzerland. Email: nora.szoke@epfl.ch.}}
\date{}

\theoremstyle{plain}
\newtheorem{thm}{Theorem}
\newtheorem{theorem}{Theorem}[section]
\newtheorem{lemma}[theorem]{Lemma}
\newtheorem{claim}[theorem]{Claim}
\newtheorem{corollary}[theorem]{Corollary}
\newtheorem{proposition}[theorem]{Proposition}

\theoremstyle{definition}
\newtheorem{definition}[theorem]{Definition}
\newtheorem{notation}[theorem]{Notation}

\theoremstyle{remark}

\newtheorem{remark}[theorem]{Remark}

\newcommand{\acts}{\curvearrowright}
\newcommand{\hooklongrightarrow}{\mathrel{\lhook\mkern -3.5mu\relbar\mkern -4.5mu \rightarrow }}

\begin{document}

\maketitle

\begin{abstract}
We prove a Tits alternative for topological full groups of minimal actions of finitely generated groups. On the one hand, we show that topological full groups of minimal actions of virtually cyclic groups are amenable. By doing so, we generalize the result of Juschenko and Monod for $\mathbf{Z}$-actions. On the other hand, when a finitely generated group $G$ is not virtually cyclic, then we construct a minimal free action of $G$ on a Cantor space such that the topological full group contains a non-abelian free group.
\end{abstract}

\section{Introduction}

\noindent Let $G$ be a group and consider an action of $G$ on a compact topological space $C$ by homeomorphisms. This action is called \emph{minimal} if $C$ has no proper $G$-invariant closed subset. The \emph{topological full group} $[[G \acts C]]$ is defined to be the group of all homeomorphisms of $C$ that are piecewise given by elements of $G$, where each piece is open in $C$.

Topological full groups were introduced by Giordano, Putnam and Skau \cite{GPS99} for $\mathbf{Z}$-actions. Matui investigated these groups in a series of papers (\cite{Mat06}, \cite{Mat11}, \cite{Mat15}). He showed that the derived subgroup of the topological full group is often simple, and in some cases, e.g. for minimal subshifts, it is also finitely generated. Nekrashevych further generalized these results in \cite{Nek17}.

Juschenko and Monod proved that for any minimal action of $\mathbf{Z}$ on a Cantor space $\Sigma$, the topological full group $[[ \mathbf{Z} \acts \Sigma ]]$ is amenable \cite{JM13}. Relying on the results of Matui, they provided the first examples of finitely generated infinite simple amenable groups.

It is natural to ask whether the Juschenko-Monod theorem holds for minimal actions of other (necessarily amenable) groups as well. The result of Elek and Monod \cite{EM13} answers this question, showing that already in the case of $\mathbf{Z}^2$ there exists a counterexample. Their result is even stronger, they construct a minimal action of $\mathbf{Z}^2$ on a Cantor space such that the topological full group contains a non-abelian free group.

The goal of this paper is to generalize both results (\cite{JM13} and \cite{EM13}). We show that the Juschenko-Monod result holds for virtually cyclic groups and any compact space. On the other hand when the group $G$ is infinite but not virtually $\mathbf{Z}$, then we construct a minimal action of $G$ on a Cantor space such that the topological full group contains a non-abelian free group. These two statements form a Tits alternative for all finitely generated groups.

\begin{thm}\label{thm:A}
Let $G$ be a virtually cyclic group. Then for any minimal action of $G$ on a compact Hausdorff topological space $C$ by homeomorphisms, the topological full group $[[G\acts C]]$ is amenable.
\end{thm}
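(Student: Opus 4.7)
My plan is to reduce Theorem~\ref{thm:A} to the Juschenko--Monod theorem \cite{JM13} via two independent steps: passing from a general virtually cyclic $G$ to $\mathbf{Z}$, and upgrading \cite{JM13} from Cantor spaces to arbitrary compact Hausdorff spaces.

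If $G$ is finite, minimality forces $C$ to be a single finite orbit and $[[G \acts C]]$ embeds into $\mathrm{Sym}(C)$. Otherwise I would fix a normal subgroup $H \cong \mathbf{Z}$ of finite index in $G$ (which exists by the usual core construction). Restricting the action to $H$, a short orbit-closure argument---two $H$-minimal sets are either equal or disjoint, and $G/H$ acts transitively on the set of $H$-minimal components---shows that $C$ decomposes as a finite $G$-invariant clopen partition $C = Y_1 \sqcup \cdots \sqcup Y_m$ into $H$-minimal subsets. Any element of $[[H \acts C]]$ must preserve each $Y_i$, so
\[
[[H \acts C]] \;=\; \prod_{i=1}^{m} [[H \acts Y_i]],
\]
and a piecewise computation using $H \triangleleft G$ shows that $[[H \acts C]]$ is normal in $[[G \acts C]]$.

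Next I would establish the compact-Hausdorff extension of \cite{JM13} and apply it to each $[[H \acts Y_i]]$. One route is to inspect the Juschenko--Monod argument and verify that its extensive-amenability input depends only on the combinatorial structure of $\mathbf{Z}$-orbits (bi-infinite lines, once the trivial finite case is dispatched) and not on any clopen basis. An alternative is to realise $[[H \acts Y_i]]$ as an increasing union of topological full groups coming from $H$-invariant Boolean subalgebras of clopens of $Y_i$, each associated with a Cantor factor and hence covered by \cite{JM13}. Either way, the finite product $[[H \acts C]]$ comes out amenable.

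To finish, I would analyse the quotient $[[G \acts C]] / [[H \acts C]]$ via the locally constant cocycle
\[
\phi(h)(c) \;=\; g_c H, \qquad \text{where } h|_U = g_c|_U \text{ on an open neighbourhood } U \text{ of } c,
\]
which satisfies $\phi(h_1 h_2)(c) = \phi(h_1)(h_2(c))\,\phi(h_2)(c)$ and has kernel precisely $[[H \acts C]]$. The quotient then embeds into a ``piecewise permutation'' group of the finite set $\{Y_1,\dots,Y_m\}$ with labels in the finite group $G/H$. This is the step I expect to be the main obstacle: because a single $h \in [[G \acts C]]$ can act by different $H$-cosets on different clopen pieces, $\phi$ is not a homomorphism and the quotient is not a priori finite. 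My plan is to show that this quotient is locally finite---a directed union of finite subgroups indexed by clopen partitions refining $\{Y_i\}$---hence amenable, and then conclude that $[[G \acts C]]$ itself is amenable by the standard stability of amenability under extensions.
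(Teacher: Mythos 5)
Your reduction collapses at the step you flag as the ``main obstacle,'' but the problem is worse than $\phi$ failing to be a homomorphism: the subgroup $[[H\acts C]]$ is simply not normal in $[[G\acts C]]$, so there is no quotient to analyse and no extension of groups to appeal to. Concretely, take $G=\mathbf{Z}=\langle T\rangle$ acting minimally on a Cantor set, $H=2\mathbf{Z}$, and a clopen $U$ with $U,TU,T^2U,T^3U$ pairwise disjoint. Let $\varphi\in[[G\acts C]]$ be the involution equal to $T$ on $U$, to $T^{-1}$ on $TU$, and to the identity elsewhere, and let $\psi=T^2\in[[H\acts C]]$. For $c\in U$ one computes $\varphi\psi\varphi^{-1}(Tc)=\varphi(T^2c)=T^2c=T\cdot(Tc)$, so near $Tc$ the conjugate acts by the odd power $T$; since the action is free, $\varphi\psi\varphi^{-1}\notin[[2\mathbf{Z}\acts C]]$. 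In cocycle terms, $\kappa_{\varphi\psi\varphi^{-1}}(\varphi(d))=\kappa_\varphi(\psi(d))\,\kappa_\psi(d)\,\kappa_\varphi(d)^{-1}$ lands in $H$ only when $\kappa_\varphi$ takes the same $H$-coset at $d$ and at $\psi(d)$, which is exactly what a general element of the full group refuses to do. So the strategy ``amenable normal subgroup with amenable quotient'' is not available, and ``the quotient is locally finite'' has no meaning as stated; you would need an entirely different mechanism to pass from $H$ to $G$. Your other step is also shakier than presented: the route via ``$H$-invariant Boolean subalgebras of clopens'' fails when $C$ has few or no proper clopen sets (e.g.\ connected $C$), and the route ``inspect the Juschenko--Monod argument'' is not a citation but a request to reprove the theorem in a more general setting -- which is essentially what the paper does.

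For comparison, the paper does not reduce to \cite{JM13}; it reruns the Juschenko--Monod machinery directly for a virtually cyclic $G$ acting on an arbitrary compact Hausdorff $C$. It fixes an orbit $X=G\cdot p$ (with $G_p$ finite and, after Lemma \ref{le:stabilizer}, commuting with the $\mathbf{Z}$-part), shows the action of the whole piecewise group $\mathrm{PW}(G\acts X)$ on $X$ is extensively amenable via recurrence of the relevant Schreier graphs (Theorem \ref{thm:recurrent} plus Varopoulos), and then builds the $\mathscr{P}_f(X)$-cocycle $\varphi\mapsto A\triangle\varphi(A)$, where $A$ is a union of half-lines in $X$, one for each coset direction, with orientations twisted by the homomorphism $\alpha$. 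The heart of the proof is Proposition \ref{prop:amenkernel}: the stabilizer of $A$ is locally finite, established by a uniform-recurrence lemma (Lemma \ref{le:upp}) that replaces any appeal to a clopen basis. If you want to salvage a reduction-style argument, the part of your plan that survives is the decomposition of $C$ into finitely many $H$-minimal clopen pieces permuted by $G$ (this is the paper's Remark \ref{rem:normalmin}, used inside Lemma \ref{le:upp}); but the passage from $[[H\acts C]]$ to $[[G\acts C]]$ must go through something like the paper's coset-indexed half-line cocycle rather than a normal-subgroup argument.
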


\begin{thm}\label{thm:B}
Let $G$ be a finitely generated group that is not virtually cyclic. Then there exists a minimal free action of $G$ on a Cantor space $\Sigma$ by homeomorphisms such that the topological full group $[[G \acts \Sigma]]$ contains a non-abelian free group.
\end{thm}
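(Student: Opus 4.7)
The overall plan is to adapt the strategy of Elek--Monod \cite{EM13}, who constructed, for $G = \mathbf{Z}^2$, a minimal free subshift $\Sigma \subset A^{\mathbf{Z}^2}$ together with two homeomorphisms $a, b \in [[\mathbf{Z}^2 \acts \Sigma]]$ generating a free group. I would first construct a minimal free subshift $\Sigma \subset A^G$ on a finite alphabet $A$, and arrange the forbidden patterns so that every point $x \in \Sigma$ carries a local ``marker'' which determines a clopen partition of $\Sigma$. On the pieces of this partition I would then define two homeomorphisms $a$ and $b$ in $[[G \acts \Sigma]]$ piecewise by appropriately chosen elements of $G$.

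A crucial preliminary input is a geometric fact about $G$: since $G$ is finitely generated and not virtually cyclic, the growth of its Cayley graph is strictly super-linear, and in particular the balls $B_n(e)$ cannot be covered by a single pair of geodesic lines through $e$ once $n$ is large. This ``two-dimensional room'' is what allows the local markers to encode two independent directions and to accommodate two non-commuting piecewise-$G$ homeomorphisms, a fact which genuinely fails in the excluded virtually cyclic case.

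Once $\Sigma$ and the piecewise group elements defining $a, b$ are in place, I would verify that $\langle a, b \rangle \leq [[G \acts \Sigma]]$ is free by applying the classical ping-pong lemma: exhibit pairwise disjoint clopen sets $A_+, A_-, B_+, B_-$ such that $a^{\pm 1}(\Sigma \setminus A_\mp) \subset A_\pm$ and $b^{\pm 1}(\Sigma \setminus B_\mp) \subset B_\pm$. The markers would be engineered so that, after applying a non-trivial reduced word $w \in F_2$ to any point $x \in \Sigma$, the outermost letter of $w$ is recorded in the marker observed near $w(a,b)\cdot x$; this is exactly the combinatorial input the ping-pong sets $A_\pm, B_\pm$ require.

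The principal obstacle is the simultaneous achievement of three competing requirements on $\Sigma$: minimality of the $G$-action, freeness of the $G$-action, and the existence of a rich enough marker structure to define $a,b$ and to certify ping-pong. Standard symbolic-dynamics techniques (Toeplitz-type constructions, hierarchical substitutions, or explicit block constructions) produce minimal free subshifts for any finitely generated $G$, but weaving in the extra marker layer without destroying either minimality or freeness is the delicate technical core of the proof; the super-linear geometry of $G$ is precisely what provides the slack needed to carry this out.
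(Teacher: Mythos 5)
Your outline correctly identifies the general shape of the argument --- a minimal free subshift-like space built from local data on the Cayley graph, elements of the full group defined piecewise from that data, and a geometric input coming from $G$ not being virtually cyclic --- but as written it defers every step that actually constitutes the proof. The construction of the space ``with a rich enough marker structure'', which you yourself flag as the delicate technical core, \emph{is} the content of the theorem and is absent. The paper does this concretely: it colors the \emph{edges} of $\mathrm{Cay}(G,S)$ with six colors $A,\dots,F$ so that each of $A,B,C$ occurs at most once at each vertex; each of $A,B,C$ then defines an involution in the topological full group, and one only needs the resulting homomorphism from $(\mathbf{Z}/2\mathbf{Z})\ast(\mathbf{Z}/2\mathbf{Z})\ast(\mathbf{Z}/2\mathbf{Z})$ to be injective. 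That criterion is much weaker than ping-pong: it suffices that every reduced word $w$ appear, syndetically, written along a path whose starting point carries a marker distinguishing it from the other vertices, so the path cannot close up and $w$ moves some point of every minimal subset. Your plan of exhibiting clopen ping-pong sets $A_\pm,B_\pm$ for two generators $a,b$ demands far more global control over the dynamics, and you give no indication of how the markers would supply it.

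The second gap is the geometric input. Super-linear growth of balls is not the statement the construction needs; the packing argument requires a \emph{uniform} escape-from-geodesics property: for every $n$ there is $K(n)$ such that from any vertex on any geodesic segment one can reach, within distance $K(n)$, a point at distance exactly $n$ from that segment. This is what guarantees that a maximal family of disjoint protected balls around already-placed words is syndetic, i.e.\ that every ball of a fixed radius meets a fresh copy of $w$ --- precisely what minimality and the injectivity argument need. Deducing the uniform statement from ``not virtually cyclic'' is itself a short but necessary argument (its failure would force the $n$-neighborhood of a single geodesic to cover arbitrarily large balls, hence linear growth). Finally, freeness of the $G$-action is not a byproduct of a generic Toeplitz-type construction: the paper must interleave, by the same packing scheme, a separate family of labeled edge pairs (one for each $g\neq e$) witnessing $g\cdot\lambda\neq\lambda$ on the whole orbit closure. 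Your proposal names these tensions but does not resolve any of them, so as it stands it is a plausible research plan rather than a proof.
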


\begin{remark} Some remarks are in order:
\begin{itemize}[nolistsep]
\item The minimality of the action is a necessary requirement: even for $\mathbf{Z}$, one can construct non-minimal actions such that the topological full group is non-amenable (see \cite{EM13}, section 3).
\item These results do not exclude the existence of a topological full group (of a minimal action of a finitely generated group) that is non-amenable but does not contain a non-abelien free group.
\item In \cite{JNdlS16} Juschenko, Nekreshevych and de la Salle give an amenability condition for topological full groups that is widely applicable. In fact, it implies many earlier results about amenability of topological full groups. To the best of the author's knowledge Theorem \ref{thm:A} is not a direct consequence of their result.
\end{itemize}
\end{remark}

\vspace{12pt}

\noindent{\bf Acknowledgements.} I would like to thank Nicolas Monod for introducing me to these questions and for helpful discussions along the way. I am also very grateful to Tam\'as Terpai for allowing me to include his proof for Proposition \ref{prop:coloring2}.

\section{Preliminaries}

Let $G$ be a group acting on a set $X$. Let $\mathrm{PW}(G\acts X)$ denote the \emph{piecewise group} of this action. The elements of the piecewise group are bijections on $X$ that are piecewise given by finitely many elements of $G$. Formally, $\varphi\in \mathrm{PW}(G\acts X)$ if $\varphi \colon X\rightarrow X$ is a bijection and there exists a natural number $n\in \mathbf{N}$, a finite partition $X=\sqcup_{i=1}^n X_i$ and group elements $g_1,\dots, g_n\in G$ such that for each $i\in [1,n]$, $\varphi(x)=g_i\cdot x$ for every $x\in X_i$.

\begin{remark}\label{rem:piecewise}
There is a useful equivalent characterization of piecewise maps. The bijection $\varphi$ from $X$ to itself is a piecewise $G$ map if and only if there exists a finite set $S\subset G$ such that $\varphi(x)\in S\cdot x$ for every $x\in X$. From this description it immediately follows that the piecewise $G$ maps form a group.
\end{remark}

Note that if $G$ acts on a compact topological space by homeomorphisms, then the topological full group of this action is a subgroup of the piecewise group. This is due to the compactness of the space, since a partition into open subsets is necessarily finite.

Recall that if $(P)$ is a property of groups, then we call the group $G$ \emph{locally $(P)$} if all finitely generated subgroups of $G$ have the property $(P)$.

A topological space is a \emph{Cantor space} if it is non-empty, compact, metrizable, totally disconnected and has no isolated points. Recall that any two Cantor spaces are homeomorphic to each other.

\begin{notation}
For integers $a,b\in \mathbf{Z}$ we will denote the closed interval of integers between $a$ and $b$ by $[a,b]$. So
\[ [a,b]:=\{k\in \mathbf{Z}  :  \min(a,b)\leq k\leq \max(a,b) \}.\]
\end{notation}

\subsection*{Virtually $\mathbf{Z}$ groups}

Let $G$ be a virtually $\mathbf{Z}$ group. Then we can find a finite index subgroup $H\leq G$ that is isomorphic to $\mathbf{Z}$. Let
\[N=\bigcap_{g\in G} g^{-1}Hg.\]
Since $H$ has finitely many conjugates in $G$, $N$ is a finite index subgroup of $H$, hence it also has finite index in $G$. Therefore, we found a finite index normal subgroup of $G$ that is isomorphic to $\mathbf{Z}$.

Thus, a virtually $\mathbf{Z}$ group is always an extension of a finite group by $\mathbf{Z}$. Consider the short exact sequence
\[\{0\}\longrightarrow \mathbf{Z}\stackrel{\iota}{\longrightarrow} G\stackrel{\pi}{\longrightarrow} Q\longrightarrow \{1\},\]
where $Q$ is a finite group. We will denote the identity element of $Q$ by $e_Q$ and the identity element of $G$ by $e_G$. Choose a section $x\mapsto g_x$ from $Q$ to $G$ such that $g_{e_Q}=e_G$, so we have that $\pi(g_x)=x$ for every $x\in Q$. This defines the cocycle $f\colon Q\times Q\rightarrow \mathbf{Z}$ by the equality
\[g_xg_y=\iota(f(x,y)) g_{xy} \quad \text{for } x,y\in Q.\]
The map $\alpha \colon Q\rightarrow \mathrm{Aut}(\mathbf{Z})=\{\pm1\}$, $x\mapsto \alpha_x$ defined by
\[g_x \iota(n) g_x^{-1}=\iota(n^{\alpha_x}) \quad \text{for } x\in Q \text{ and } n\in \mathbf{Z}\]
is a homomorphism. The maps $f$ and $\alpha$ determine the extension.

\begin{definition}\label{def:virtZ}
We think of a virtually $\mathbf{Z}$ group $G$ as the set $\mathbf{Z}\times Q$ with the multiplication
\[(a,x) (b,y)=(f(x,y)+a+b^{\alpha_x}, xy) \quad \text{for } a,b\in \mathbf{Z}, x,y\in Q,\]
where $Q$ is a finite group, $f\colon Q\times Q\rightarrow \mathbf{Z}$ a cocycle and $\alpha\colon  Q\rightarrow \mathrm{Aut}(\mathbf{Z})=\{\pm 1\}$ is a homomorphism.
\end{definition}

\begin{remark}
The identity element of $G$ is $e_G=(0,e_Q)$. Also note that the choice $g_{e_Q}=e_G$ implies that $f(e_Q,x)=f(x,e_Q)=0$ for all $x\in Q$.
\end{remark}

\begin{lemma}\label{le:virtZsubgr}
Let $H$ be a subgroup of the virtually $\mathbf{Z}$ group $G$. Then one of the following two cases holds.
\begin{itemize}
\item $H$ is finite and it has infinite index in $G$,
\item $H$ is infinite and it has finite index in $G$.
\end{itemize}
\end{lemma}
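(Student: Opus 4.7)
The plan is to exploit the finite-index normal subgroup $\iota(\mathbf{Z}) \triangleleft G$ established in the preceding paragraphs, and to split into cases according to the intersection $H \cap \iota(\mathbf{Z})$. Since this intersection is a subgroup of $\mathbf{Z}$, it is either trivial or of the form $\iota(k\mathbf{Z})$ for some positive integer $k$, and these two alternatives will line up exactly with the two conclusions of the lemma.

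In the first case, where $H \cap \iota(\mathbf{Z}) = \{e_G\}$, I would observe that the kernel of the restriction $\pi|_H$ equals $H \cap \iota(\mathbf{Z}) = \{e_G\}$, so $\pi|_H$ injects $H$ into the finite group $Q$, forcing $|H| \le |Q|$. Thus $H$ is finite; and since $G$ is infinite (as it contains $\iota(\mathbf{Z})$), a finite subgroup must automatically have infinite index, yielding the first alternative.

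In the second case, $H \cap \iota(\mathbf{Z}) = \iota(k\mathbf{Z})$ for some $k \ge 1$, so $H$ contains an infinite cyclic subgroup and is therefore infinite. For the index, I would use the multiplicativity of the index along the chain $H \cap \iota(\mathbf{Z}) \le \iota(\mathbf{Z}) \le G$ to compute
\[
[G : H \cap \iota(\mathbf{Z})] = [G : \iota(\mathbf{Z})] \cdot [\iota(\mathbf{Z}) : \iota(k\mathbf{Z})] = |Q|\cdot k,
\]
which is finite. Since $H \cap \iota(\mathbf{Z}) \le H$, monotonicity of the index gives $[G : H] \le k|Q| < \infty$, which is the second alternative.

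The argument is elementary and I do not expect any serious obstacle: both cases rely only on the classification of subgroups of $\mathbf{Z}$ and the multiplicativity of the subgroup index. The only thing to double-check is completeness of the dichotomy, but this is immediate since every subgroup of $\mathbf{Z}$ is either $\{0\}$ or of the form $k\mathbf{Z}$ with $k \ge 1$.
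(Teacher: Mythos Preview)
Your proof is correct and follows essentially the same approach as the paper's: both hinge on the intersection of $H$ with the normal copy of $\mathbf{Z}$, and both conclude with the same index bound $[G:H]\le k|Q|$ via the containment $\iota(k\mathbf{Z})\le H$. The only cosmetic difference is that the paper organizes the dichotomy as ``$H$ finite versus $H$ infinite'' and then argues that in the infinite case $H\cap(\mathbf{Z}\times\{e_Q\})$ must be nontrivial, whereas you split directly on that intersection and use the injectivity of $\pi|_H$ to handle the trivial-intersection case; this is a slightly more streamlined packaging of the same idea.
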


\begin{proof}
If $H$ is finite, then it must have infinite index, since $G$ itself is infinite. Now assume that $H$ is infinite. Using the notations in Definition \ref{def:virtZ}, there exists at least one $x\in Q$ such that $H\cap (\mathbf{Z}\times \{x\})$ is infinite. Suppose that $(a,x), (b,x)\in H$ for $a\neq b$, then $(a,x)^{-1}(b,x)\in H\cap (\mathbf{Z}\times \{e_Q\})$. Since $a\neq b$, this is not the identity element. Therefore $H\cap (\mathbf{Z}\times \{e_Q\})$ is a nontrivial subgroup, say it is equal to $k\mathbf{Z}\times \{e_Q\}$. Hence $|G:H|\leq |G:(k\mathbf{Z}\times \{e_Q\})| =k|Q|$, so the index of $H$ is finite.
\end{proof}

\section{Extensive amenability}

Consider a group $G$ acting on a set $X$. Recall that this action is called amenable if there exists a $G$-invariant mean on $X$. In this section we focus on a stronger property, namely the extensive amenability of the action.

\begin{notation}
If $X$ is a set, then the set of all finite subsets of $X$ becomes an abelian group with the symmetric difference. Let us denote this group by $\mathscr{P}_f(X)$. Note that a $G$-action on $X$ gives rise to an action on $\mathscr{P}_f(X)$.
\end{notation}

\begin{definition}
The action $G\acts X$ is \emph{extensively amenable} if there exists a $G$-invariant mean on $\mathscr{P}_f(X)$ giving full weight to the collection of sets containing any given element of $\mathscr{P}_f(X)$.
\end{definition}

Extensive amenability was introduced and first used (without a name) in \cite{JM13}. The above definition was given in \cite{JMBMdlS18}, and we can find  equivalent characterizations in \cite{JM13}, \cite{JNdlS16} and \cite{JMBMdlS18}. We will use the following two statements about extensive amenability.

\begin{lemma}[Lemma 2.2 in \cite{JMBMdlS18}]\label{le:extamen_fingen}
Let $G$ be a group acting on a set $X$. Then the following statements are equivalent.
\begin{enumerate}[nolistsep, label=(\roman*)]
\item The action of $G$ on $X$ is extensively amenable.
\item For every finitely generated subgroup $H\leq G$ and every $H$-orbit $Y\subseteq X$, the action of $H$ on $Y$ is extensively amenable.
\end{enumerate}
\end{lemma}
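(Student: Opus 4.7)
My plan is to handle the two implications separately, with the essential content in (ii) $\Rightarrow$ (i). The direction (i) $\Rightarrow$ (ii) is straightforward: given a $G$-invariant mean $m$ on $\mathscr{P}_f(X)$ witnessing extensive amenability, fix a finitely generated subgroup $H\leq G$ and an $H$-orbit $Y\subseteq X$. The intersection map $\pi\colon\mathscr{P}_f(X)\to\mathscr{P}_f(Y)$, $F\mapsto F\cap Y$, is an $H$-equivariant homomorphism for the symmetric-difference structures, so the pushforward $\pi_\ast m$ is an $H$-invariant mean on $\mathscr{P}_f(Y)$. For any $E\in\mathscr{P}_f(Y)$, since $E\subseteq Y$ the conditions $E\subseteq F\cap Y$ and $E\subseteq F$ agree, hence $(\pi_\ast m)\bigl(\{F'\colon E\subseteq F'\}\bigr)=m\bigl(\{F\colon E\subseteq F\}\bigr)=1$, delivering extensive amenability of $H\acts Y$.

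For the converse, I would proceed by weak-$\ast$ compactness of the space of means on $\mathscr{P}_f(X)$. The target is to construct, for each pair $(H,E)$ with $H\leq G$ finitely generated and $E\in\mathscr{P}_f(X)$, a mean $\mu_{H,E}$ that is $H$-invariant and concentrated on $\{F\colon E\subseteq F\}$. Ordering such pairs by inclusion in both coordinates makes them a directed set, and any weak-$\ast$ cluster point $m_\infty$ of the resulting net will witness extensive amenability of $G\acts X$: each $g\in G$ lies in $H$ for all sufficiently large $H$, so $g$-invariance passes to the limit, and for every fixed $E_0\in\mathscr{P}_f(X)$ the set $\{F\colon E_0\subseteq F\}$ has $\mu_{H,E}$-mass $1$ as soon as $E\supseteq E_0$, a condition that is weak-$\ast$ closed.

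The heart of the proof, and the main obstacle, is the construction of $\mu_{H,E}$ itself: we are given means only on individual orbits and must merge them into a single $H$-invariant mean on the much larger set $\mathscr{P}_f(X)$. Decompose $X$ into $H$-orbits and let $Y_1,\ldots,Y_k$ be the finitely many orbits meeting $E$; set $E_j:=E\cap Y_j$. Hypothesis (ii) supplies for each $j$ an $H$-invariant mean $m_j$ on $\mathscr{P}_f(Y_j)$ concentrated on $\{F_j\colon E_j\subseteq F_j\}$. I would then define $\mu_{H,E}$ on $\ell^\infty(\mathscr{P}_f(X))$ by the iterated integral
\[
\mu_{H,E}(f) \;=\; \int\!\cdots\!\int f(F_1\cup\cdots\cup F_k)\,dm_1(F_1)\cdots dm_k(F_k),
\]
where $F_j$ ranges over $\mathscr{P}_f(Y_j)$. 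A change of variables $F_j\mapsto h^{-1}F_j$ in each factor --- legitimate because each $Y_j$ is $H$-invariant and each $m_j$ is $H$-invariant --- shows that $\mu_{H,E}$ is $H$-invariant, and for $f=\mathbf{1}_{\{F\colon E\subseteq F\}}$ the disjointness of the $Y_j$ factors the integrand as $\prod_j \mathbf{1}_{\{E_j\subseteq F_j\}}$, giving $\mu_{H,E}$-mass $\prod_j 1=1$. The key point permitting this iterated integral (rather than a delicate infinite product over all orbits) is that $\mu_{H,E}$ may be concentrated on finite subsets of $Y_1\cup\cdots\cup Y_k$; this set is $H$-invariant, so the restriction procedure still defines a bona fide $H$-invariant mean on $\ell^\infty(\mathscr{P}_f(X))$.
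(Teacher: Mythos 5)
The paper does not prove this lemma at all: it is quoted verbatim as Lemma 2.2 of \cite{JMBMdlS18} and used as a black box, so there is no in-paper proof to compare against. Judged on its own, your argument is correct and complete, and it is essentially the standard proof of this reduction. The direction (i)~$\Rightarrow$~(ii) via the pushforward under $F\mapsto F\cap Y$ is exactly right: that map is a symmetric-difference homomorphism, it is $H$-equivariant because $Y$ is $H$-invariant, and $\{F: E\subseteq F\cap Y\}=\{F:E\subseteq F\}$ for $E\subseteq Y$. For (ii)~$\Rightarrow$~(i), the two ingredients --- merging the finitely many relevant orbit means by an iterated integral (no Fubini-type issue arises, since you only need linearity, positivity and the one-variable-at-a-time change of variables $F_j\mapsto h^{-1}F_j$, each justified by $H$-invariance of $m_j$ and $H$-invariance of the orbit $Y_j$), and then passing to a weak-$\ast$ cluster point over the directed set of pairs $(H,E)$ --- are both sound; the conditions ``$g$-invariant'' and ``mass $1$ on $\{F:E_0\subseteq F\}$'' are weak-$\ast$ closed and eventually satisfied along the net, so they survive in the limit. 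The only cosmetic point is the degenerate case $E=\varnothing$ (take $\mu_{H,\varnothing}$ to be the point mass at $\varnothing$), which does not affect the limit. In short: correct, but the comparison target is the cited reference rather than this paper.
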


\begin{theorem}[Theorem 1.2 in \cite{JNdlS16}]\label{thm:recurrent}
Let $G$ be a finitely generated group with generating set $S$, acting transitively on a set $X$. If the graph of the action of $G$ on $X$ with generating set $S$ is recurrent, then the action $G\acts X$ is extensively amenable.
\end{theorem}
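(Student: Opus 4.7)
The plan is to establish extensive amenability by producing a sequence $(\mu_n)_n$ of approximately $G$-invariant probability measures on $\mathscr{P}_f(X)$ charging sets containing any prescribed finite subset; a weak-$*$ cluster point then gives the required $G$-invariant mean. I will build these measures from random walks on the Schreier graph $\Gamma = \Gamma(G, X, S)$, whose recurrence is the sole input. After passing to the symmetric closure I assume $S$ is symmetric.

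Fix a basepoint $x_0 \in X$, let $(s_i)_{i \ge 1}$ be i.i.d.\ uniform on $S$, and set $g_n := s_n s_{n-1} \cdots s_1$. The \emph{inverted orbit} at time $n$ is
\[
O_n := \{g_n g_k^{-1} x_0 : 0 \le k \le n\} = \{x_0,\; s_n x_0,\; s_n s_{n-1} x_0,\; \ldots,\; g_n x_0\},
\]
and I take $\mu_n$ to be its distribution on $\mathscr{P}_f(X)$. Two facts are pivotal: reversing the order of the i.i.d.\ steps shows that the sequence of points defining $O_n$ has the same joint law as the simple random walk on $\Gamma$ of length $n$ started at $x_0$, so by the Kesten--Spitzer--Whitman range theorem and the recurrence of $\Gamma$ one obtains $\mathbb{E}[|O_n|] = o(n)$; moreover $x_0 \in O_n$ almost surely (the term $k = n$).

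The heart of the argument is the approximate invariance $\|s_* \mu_n - \mu_n\|_1 \to 0$ for every $s \in S$. I would prove it by a coupling that realizes left multiplication by $s$ on $O_n$ as a localized perturbation of the underlying walk---inserting or altering a single step at a uniformly chosen position changes the inverted orbit in only boundedly many places in expectation---and then uses the sublinearity of the range to bound the resulting total variation by a constant times $\mathbb{E}[|O_n|]/n$. Since $S$ generates $G$, this extends to approximate invariance under all of $G$; the support condition is obtained by transitivity, since any prescribed finite subset $F_0 \subset X$ can be forced into $O_n$ with probability tending to $1$ by shifting the basepoint along a path in $\Gamma$ covering $F_0$. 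The main obstacle is precisely the total-variation estimate: the naive bound $|s\, O_n \triangle O_n| \le 2|O_n|$ is worthless, because $|O_n|$ is exactly the range we are trying to control, and the real argument must exploit the i.i.d.\ symmetric structure of the walk to turn the action of a generator into a single-step modification whose effect on the orbit is $O(1)$ rather than of the order of the range itself.
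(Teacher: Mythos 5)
This theorem is quoted by the paper from \cite{JNdlS16} and not reproved there, so the comparison must be with the source's argument. Your proposal correctly isolates the right probabilistic input --- the inverted orbit $O_n$ and a Kesten--Spitzer--Whitman-type computation giving $\mathbb{E}|O_n|=o(n)$ under recurrence --- but the bridge from this to extensive amenability is missing, and the bridge you sketch would not carry the weight. The estimate $\|s_*\mu_n-\mu_n\|_1\to 0$ for $\mu_n=\mathrm{law}(O_n)$ is not a deferrable verification: in your formulation it \emph{is} the theorem, and the mechanism you describe does not produce it. First, knowing that two random sets almost surely differ in boundedly many elements says nothing about the total variation distance between their laws; the laws can be mutually singular, so one needs an honest coupling of $s_*\mu_n$ with $\mu_n$, not a pathwise bound on $|sO_n\triangle O_n|$. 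Second, ``altering a single step at a uniformly chosen position'' is not a local perturbation of $O_n$: changing $s_j$ in the word $s_n\cdots s_1$ replaces every point $s_n\cdots s_{j}\cdots s_{k+1}x_0$ with $k<j$, i.e.\ it translates an entire suffix of the defining trajectory by a conjugate of a generator, and the resulting symmetric difference can be of the order of $|O_n|$ itself. The only genuinely local modification is prepending a fresh increment, which gives the exact identity $O_{n+1}=s_{n+1}O_n\cup\{x_0\}$; but that relates $\mu_{n+1}$ to the \emph{average} of the measures $s_*\mu_n$ over $s\in S$ (a stationarity statement), and upgrading averaged near-invariance to invariance under each individual generator is a Liouville-type property of the induced walk on $\mathscr{P}_f(X)$ that does not follow from $\mathbb{E}|O_n|=o(n)$ by any elementary argument.

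The proof in \cite{JNdlS16} avoids this entirely: extensive amenability of $G\acts X$ is there characterized by the non-exponential decay of the functional $\mathbb{E}\bigl[2^{-|O_n|}\bigr]$, i.e.\ by $\lim_n\mathbb{E}\bigl[2^{-|O_n|}\bigr]^{1/n}=1$, an equivalence established by functional-analytic means (the limit is an operator norm on $\ell^2(\mathscr{P}_f(X))$, and norm one amounts to weak containment of invariant vectors). Given that criterion, recurrence finishes the proof in one line: $\mathbb{E}|O_n|=o(n)$ together with Jensen's inequality gives $\mathbb{E}\bigl[2^{-|O_n|}\bigr]\ge 2^{-\mathbb{E}|O_n|}=2^{-o(n)}$. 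So to complete your route you would either have to prove the total variation estimate outright (essentially a tail-triviality theorem for lamplighter-type walks over a recurrent base, which is a theorem of Kaimanovich type rather than a computation), or substitute the exponential-moment criterion for your Reiter-type claim; as written, the argument has a gap at exactly its central step. A smaller inaccuracy: even after reversing the increments, the sequence of points defining $O_n$ is the projection to $X$ of the \emph{right} random walk on $G$ and is not itself the simple random walk on the Schreier graph; what survives the reversal is the distribution of the cardinality (each point of $O_n$ is new precisely when an auxiliary Schreier walk fails to return to $x_0$), which fortunately is all you need for $\mathbb{E}|O_n|=o(n)$.
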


Recall that we call a graph recurrent if the simple random walk on this graph is recurrent. (Since we are dealing with connected vertex transitive graphs, recurrence does not depend on the starting point.) We refer the reader to  \cite{LyPer16} for an introduction to random walks. For a generating set $S\subset G$, the graph of the action $G\acts X$ is given by the vertex set $X$ and the set of edges $\{(x,sx) : x\in X, s\in S\}$. The recurrence of this graph does not depend on the generating set.

We will use the `if' direction of the following theorem.

\begin{theorem}[Varopoulos, \cite{Var86}]\label{prop:virtZrec}
A finitely generated group is recurrent if and only if it is finite, or virtually $\mathbf{Z}$, or virtually $\mathbf{Z}^2$.
\end{theorem}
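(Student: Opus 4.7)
The plan is to prove both directions separately. The ``if'' direction is classical: by Pólya's theorem, the simple random walks on $\mathbf{Z}$ and on $\mathbf{Z}^2$ are recurrent, since the return probabilities at time $2n$ decay like $n^{-1/2}$ and $n^{-1}$ respectively, yielding divergent sums. Recurrence of the simple random walk on a Cayley graph is invariant under quasi-isometry; in particular it does not depend on the choice of finite symmetric generating set, and is preserved on passing to a finite-index subgroup or overgroup. Consequently, every finite group, virtually $\mathbf{Z}$ group, and virtually $\mathbf{Z}^2$ group is recurrent.

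The substantive content is the ``only if'' direction. Suppose $G$ is finitely generated and recurrent, and let $b(n) = |B_S(e,n)|$ denote its volume growth. The key analytic input is Varopoulos's own heat-kernel upper bound: if $b(n) \geq c n^d$ for some $d > 0$ and all large $n$, then the return probabilities of the simple random walk satisfy
\[
p_{2n}(e,e) \leq C n^{-d/2}.
\]
When $d \geq 3$ the series $\sum_n p_{2n}(e,e)$ converges and the walk is transient, contradicting recurrence. Hence a recurrent group must have volume growth bounded above by a polynomial of degree at most $2$; in particular it cannot have intermediate or exponential growth.

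Having reduced to polynomial growth of degree $d \leq 2$, I invoke Gromov's polynomial growth theorem: $G$ contains a finite-index nilpotent subgroup $N$. The Bass-Guivarc'h formula expresses the growth degree of $N$ as
\[
d(N) = \sum_{k \geq 1} k \cdot \mathrm{rank}_{\mathbf{Z}}\bigl(N_k/N_{k+1}\bigr),
\]
where $(N_k)_{k \geq 1}$ is the lower central series. A short case analysis, combined with the standard fact that a finitely generated nilpotent group is finite whenever its abelianization is finite, then finishes the classification. If $d(N) = 0$, then $N$ is finite and so is $G$. If $d(N) = 1$, the only admissible profile is $\mathrm{rank}(N/[N,N]) = 1$ with all higher terms of torsion-free rank zero, forcing $N$ virtually $\mathbf{Z}$. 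If $d(N) = 2$, any nonzero rank in $N_2/N_3$ would contribute at least $2$ to $d(N)$, incompatible with a simultaneous nontrivial $r_1$; and $r_1 = 0$ is excluded because it would make $N$ finite. Thus $\mathrm{rank}(N/[N,N]) = 2$ with all higher quotients finite, so $N$ is virtually $\mathbf{Z}^2$.

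The main obstacles are the two deep black boxes: Varopoulos's heat-kernel estimate, which in \cite{Var86} is obtained from a discrete isoperimetric inequality on Cayley graphs (with later refinements by Coulhon and Saloff-Coste), and Gromov's polynomial growth theorem. Once both are granted, the reduction from recurrence to small growth degree is immediate, and the Bass-Guivarc'h case analysis identifying the groups of growth degree $0$, $1$, $2$ with the three classes in the statement is elementary.
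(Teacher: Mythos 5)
You should first note that the paper does not prove this statement at all: it is quoted verbatim from Varopoulos \cite{Var86}, and the paper only ever uses the ``if'' direction (recurrence of finite, virtually $\mathbf{Z}$ and virtually $\mathbf{Z}^2$ groups), via Theorem~\ref{thm:recurrent} and Rayleigh monotonicity. So there is no in-paper argument to compare with; what you have written is the standard modern outline of Varopoulos' theorem (P\'olya plus quasi-isometry invariance of type for the ``if'' direction; heat-kernel upper bounds from volume growth, Gromov's polynomial growth theorem, and the Bass--Guivarc'h formula for the ``only if'' direction), and as an outline it is sound, with the deep inputs correctly identified as black boxes.

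There is, however, one step stated in a way that does not follow as written: from the heat-kernel bound you only get that recurrence is incompatible with a \emph{uniform lower bound} $b(n)\geq c\,n^{3}$ for all large $n$; negating this gives merely $\liminf_{n} b(n)/n^{3}=0$, not ``volume growth bounded above by a polynomial of degree at most $2$''. A priori a recurrent group could have a growth function that is small along a subsequence without being polynomially bounded, and Gromov's theorem in its basic form requires a polynomial upper bound. The standard repair is the van den Dries--Wilkie refinement of Gromov's theorem: a polynomial bound $b(n)\leq Cn^{d}$ along an infinite subsequence already forces $G$ to be virtually nilpotent; then Bass--Guivarc'h gives that the growth is equivalent to $n^{d(N)}$ with $d(N)$ an integer, and $\liminf b(n)/n^{3}=0$ forces $d(N)\leq 2$, after which your case analysis (including the observation that $r_{1}=0$ makes a finitely generated nilpotent group finite, and passing to a torsion-free finite-index subgroup to get exactly $\mathbf{Z}$ or $\mathbf{Z}^{2}$) goes through. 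With that refinement inserted, your sketch is a correct, if heavily black-boxed, proof; without it, the reduction ``recurrent $\Rightarrow$ polynomial growth of degree $\leq 2$'' is a genuine gap.
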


In the following proposition we give a sufficient condition for the action of the piecewise group to be extensively amenable. The same statement follows from the first part of Theorem 1.4 in \cite{JdlS15} in the case of finitely generated groups.

\begin{proposition}\label{prop:extamen_fullgr}
Let $G$ be a group acting on a set $X$. Assume that for all finitely generated subgroups $H\leq G$ and all $H$-orbits $Y\subseteq X$ the graph of the action $H\acts Y$ is recurrent. Then the action of the piecewise group $\mathrm{PW}(G\acts X)$ on $X$ is extensively amenable.
\end{proposition}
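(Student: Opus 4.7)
The plan is to combine the hypothesis with Lemma~\ref{le:extamen_fingen} and Theorem~\ref{thm:recurrent}. By Lemma~\ref{le:extamen_fingen} it is enough to prove that for every finitely generated subgroup $K\leq\mathrm{PW}(G\acts X)$ and every $K$-orbit $Y\subseteq X$, the action $K\acts Y$ is extensively amenable.

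The first concrete step is to pass from $K$ to a suitable finitely generated subgroup of $G$. Fix a finite generating set $\{\varphi_1,\dots,\varphi_k\}$ of $K$. By Remark~\ref{rem:piecewise}, for each $i$ there is a finite $S_i\subset G$ with $\varphi_i(x)\in S_i\cdot x$ for all $x\in X$ (and the same applies to $\varphi_i^{-1}$ with $S_i^{-1}$). Let $S:=\bigcup_i(S_i\cup S_i^{-1})$ and $H:=\langle S\rangle\leq G$. Since each element of $K$ is a word in the $\varphi_i^{\pm 1}$, every $\psi\in K$ is a piecewise $H$-map and satisfies $\psi(x)\in H\cdot x$ for all $x\in X$. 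Consequently the $K$-orbit $Y$ is contained in a single $H$-orbit $Z$, and $Z$ itself is $K$-invariant.

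Because $H$ is finitely generated, the hypothesis applies: the graph of $H\acts Z$ with generating set $S$ is recurrent, and Theorem~\ref{thm:recurrent} produces an $H$-invariant mean $\mu$ on $\mathscr{P}_f(Z)$ that gives full weight to $\{A\in\mathscr{P}_f(Z):F\subseteq A\}$ for every finite $F\subseteq Z$. Since $Z$ is $K$-invariant, $K$ also acts on $\mathscr{P}_f(Z)$, and if a mean with the full-weight property and additionally $K$-invariant can be produced on $\mathscr{P}_f(Z)$, then its push-forward to $\mathscr{P}_f(Y)$ yields the desired extensively amenable witness for $K\acts Y$.

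The step I expect to be the main obstacle is precisely this upgrade from $H$-invariance to $K$-invariance. The difficulty is that a piecewise $H$-map $\varphi\in K$, with pieces $Z=\sqcup_j Z_j$ and $\varphi|_{Z_j}=h_j\in H$, acts on $A\in\mathscr{P}_f(Z)$ as $\varphi(A)=\bigsqcup_j h_j(A\cap Z_j)$, which is genuinely not the action of any single element of $H$. The strategy I would pursue is to recast extensive amenability as an approximate (Reiter--F\o lner) invariance condition on $\mathscr{P}_f(Z)$, and to exhibit a sequence of probability measures on $\mathscr{P}_f(Z)$ that is simultaneously $H$-almost invariant, $\varphi_i$-almost invariant for $i=1,\dots,k$, and concentrated on subsets containing any prescribed finite $F$. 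The finiteness of the number of generators of $K$ and of the number of pieces of each $\varphi_i$ should make it possible to absorb the $\varphi_i$-discrepancies into the vanishing error terms of a chosen $H$-F\o lner sequence, and a weak-$*$ cluster point then produces the required $K$-invariant mean.
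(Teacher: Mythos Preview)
Your reduction to a finitely generated $K\leq\mathrm{PW}(G\acts X)$ and the construction of an ambient finitely generated $H\leq G$ with $K\leq\mathrm{PW}(H\acts X)$ are correct and match the paper. The divergence, and the gap, is exactly where you flag it: you apply Theorem~\ref{thm:recurrent} to $H\acts Z$ and then try to promote the resulting $H$-invariant mean on $\mathscr{P}_f(Z)$ to a $K$-invariant one. The proposed ``absorption of the $\varphi_i$-discrepancies into the vanishing error terms'' is only a heuristic; making it rigorous amounts to proving that extensive amenability of $H\acts Z$ implies extensive amenability of $\mathrm{PW}(H\acts Z)\acts Z$, which is a separate and nontrivial statement, not a consequence of the tools quoted here. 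As written, the argument is incomplete precisely at this step.

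The paper sidesteps this obstacle entirely by never passing through extensive amenability of $H$. Instead it applies Theorem~\ref{thm:recurrent} directly to $K$. Enlarge the generating set $S$ of $H$ so that it contains every piece $S_{\varphi}$ for $\varphi$ in a fixed finite generating set $T$ of $K$. Then the graph $\mathcal{G}_K$ of the action $K\acts Z$ (with respect to $T$) is a \emph{subgraph} of the graph $\mathcal{G}_H$ of $H\acts Z$ (with respect to $S$), on the same vertex set. By hypothesis $\mathcal{G}_H$ is recurrent, and Rayleigh's monotonicity principle implies that every connected component of $\mathcal{G}_K$ is recurrent as well. Now Theorem~\ref{thm:recurrent} applied to each $K$-orbit inside $Z$, combined with Lemma~\ref{le:extamen_fingen}, gives extensive amenability of $K\acts Z$ (hence of $K\acts Y$) directly, with no comparison of $H$-means and $K$-means needed. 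The moral: recurrence, unlike extensive amenability, passes to subgraphs for free, so it is better to stay at the level of recurrence until the very last moment.
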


\begin{remark}
By Theorem \ref{thm:recurrent} and Lemma \ref{le:extamen_fingen}, it immediately follows that the action of $G$ on $X$ is extensively amenable. Now we prove the extensive amenability of the action of the piecewise group by verifying the recurrence assumption for all finitely generated subgroups of the piecewise group.
\end{remark}

\begin{proof}
According to Lemma \ref{le:extamen_fingen}, it is sufficient to show that for a finitely generated subgroup $F\leq \mathrm{PW}(G\acts X)$, the action of $F$ on $X$ is extensively amenable. So let $F$ be any finitely generated subgroup of $\mathrm{PW}(G\acts X)$. For any $\varphi\in F$, there exists a finite set of group elements $S_{\varphi}=\{g_1,\dots, g_k\}\subset G$ such that $\varphi(x)\in S_{\varphi}\cdot x$ for all $x\in X$. Hence there exists a finitely generated subgroup $H\leq G$ such that $F\leq \mathrm{PW}(H\acts X)$.

Let $p\in X$ be an arbitrary point and let  $Y\subseteq X$ denote the $H$-orbit of $p$. By Lemma \ref{le:extamen_fingen}, it is enough to show that the $F$-action on $Y$ is extensively amenable.

Let $\mathcal{G}_F$ be the graph of the action of $F$ on $Y$, i.e., $\mathrm{V}(\mathcal{G}_F)=Y$ and $\mathrm{E}(\mathcal{G}_F)=\{(y, \varphi(y)): y\in Y,\ \varphi\in T\}$ where $T=T^{-1}\subseteq F$ is a finite generating set of $F$. This graph might not be connected.

Now each $\varphi\in T$ is a piecewise $H$ map on $X$, so we can find a finite set $S_{\varphi}\subset H$ such that $\varphi(x)\in S_{\varphi}\cdot x$ for all $x\in Y$. Let $\hat{T}=\bigcup_{\varphi\in T} S_{\varphi}\subset H$ and let $S$ be a symmetric generating set of $H$ such that $\hat{T}\subseteq S$. Let $\mathcal{G}_{H}$ denote the graph of the action of $H$ on $Y$ with generating set $S$.

The vertex set of $\mathcal{G}_F$ is equal to the vertex set of $\mathcal{G}_{H}$. Whenever $(y,\varphi(y))$ is an edge in $\mathcal{G}_F$, there exists $g\in \hat{T}$ such that $\varphi(y)=g\cdot y$. Since $\hat{T}\subseteq S$, this implies that $(y,\varphi(y))$ is also an edge of $\mathcal{G}_{H}$. Hence $\mathcal{G}_F$ is a subgraph of $\mathcal{G}_{H}$.

By the assumption in the statement the graph $\mathcal{G}_{H}$ is recurrent. As a corollary of Rayleigh's monotonicity principle, all connected subgraphs of a recurrent graph are also recurrent (for a proof see \cite{LyPer16}, Chapter 2). In particular, all connected components of $\mathcal{G}_F$ are recurrent. Hence by Theorem \ref{thm:recurrent} and Lemma \ref{le:extamen_fingen} the $F$-action on $Y$ is extensively amenable.
\end{proof}

\begin{corollary}\label{cor:extamen_locvirtZ}
Let $G$ be a locally virtually cyclic group acting on a set $X$. Then the action of the piecewise group $\mathrm{PW}(G\acts X)$ on $X$ is extensively amenable.
\end{corollary}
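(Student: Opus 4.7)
The plan is to reduce to Proposition \ref{prop:extamen_fullgr}, whose hypothesis demands that for every finitely generated subgroup $H \leq G$ and every $H$-orbit $Y \subseteq X$ the graph of $H \acts Y$ be recurrent. Since $G$ is locally virtually cyclic, any such $H$ is itself virtually cyclic, i.e.\ either finite or virtually $\mathbf{Z}$. The case $H$ finite is immediate since then every orbit is finite, and a finite connected graph is recurrent.

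Suppose then that $H$ is virtually $\mathbf{Z}$. Fix a point $y \in Y$ and set $K = \mathrm{Stab}_H(y)$, so the action $H \acts Y$ is isomorphic to $H \acts H/K$. I would invoke Lemma \ref{le:virtZsubgr} applied to $K \leq H$ to split into two cases. If $K$ has finite index in $H$, then $Y$ is finite and its Schreier graph is recurrent for trivial reasons. If $K$ is finite, then Varopoulos's theorem (Theorem \ref{prop:virtZrec}) gives recurrence of the Cayley graph of $H$ with respect to some symmetric generating set $S$, and I would transport this recurrence to the Schreier graph by projecting random walks. Letting $(X_n)_{n \geq 0}$ denote the $S$-random walk on $H$ started at $e_H$, the image process $(X_n \cdot y)_{n \geq 0}$ is the corresponding $S$-random walk on $Y$ started at $y$, and since $e_H \in K$,
\[
\mathbb{P}[X_n \cdot y = y] \;=\; \mathbb{P}[X_n \in K] \;\geq\; \mathbb{P}[X_n = e_H].
\]
Summing in $n$, recurrence of the Cayley graph (i.e.\ divergence of $\sum_n \mathbb{P}[X_n = e_H]$) forces recurrence of the Schreier graph on $Y$.

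Once both cases are settled, the hypothesis of Proposition \ref{prop:extamen_fullgr} is verified for $G \acts X$, and the proposition directly yields extensive amenability of $\mathrm{PW}(G \acts X) \acts X$. The only delicate step is the recurrence transport in the infinite-orbit case with finite stabilizer; the random walk projection above is the cleanest route, though one could alternatively bound the volume growth of the Schreier graph by the (at most linear) growth of $H$ and invoke a Nash-Williams type criterion. Splitting via Lemma \ref{le:virtZsubgr} is what makes the argument tidy, because it cuts off the trivial finite-orbit case and isolates the single substantive estimate.
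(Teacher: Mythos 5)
Your proposal is correct and follows essentially the same route as the paper: reduce to Proposition \ref{prop:extamen_fullgr}, observe that every finitely generated subgroup $H\leq G$ is virtually cyclic and hence recurrent by Theorem \ref{prop:virtZrec}, and transfer recurrence from $H$ to the graphs of its orbits. The paper cites this transfer step as a corollary of Rayleigh's monotonicity principle, whereas you prove it directly by projecting the random walk; your projection inequality $\mathbb{P}[X_n\in K]\geq \mathbb{P}[X_n=e_H]$ holds for an arbitrary stabilizer $K$, so the preliminary case split via Lemma \ref{le:virtZsubgr} is harmless but unnecessary.
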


\begin{remark}
The same statement holds if every finitely generated subgroup of $G$ is either virtually cyclic or virtually $\mathbf{Z}^2$. Also note that we make no assumptions about the set $X$ or the action of $G$ on $X$.
\end{remark}

\begin{proof}
We use Proposition \ref{prop:extamen_fullgr} for $G$. Consider a finitely generated subgroup $H\leq G$, we know that $H$ is virtually cyclic. There are two possibilites for $H$, either it is a finite group or it is virtually $\mathbf{Z}$. In both cases, the group $H$ is recurrent, so for every action of $H$, the graph of the action is also recurrent. (This is another corollary of Rayleigh's monotonicity principe, for a proof see \cite{LyPer16}, Chapter 2.) This verifies the assumption of Proposition \ref{prop:extamen_fullgr}, so the action of $\mathrm{PW}(G\acts X)$ on $X$ is extensively amenable.
\end{proof}

\section{Proof of Theorem \ref{thm:A}}

The following is an immediate consequence of Corollary 1.4 from \cite{JMBMdlS18}.

\begin{proposition}\label{prop:subgramen}
Let $G\acts X$ be an extensively amenable action. A subgroup $\Delta$ of $\mathscr{P}_f(X)\rtimes G$ is amenable if $\Delta\cap (\{\varnothing\}\times G)$ is so.
\end{proposition}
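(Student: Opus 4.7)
The plan is to invoke \cite[Corollary~1.4]{JMBMdlS18} directly, and below I sketch the mechanism behind it.

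First I would reformulate extensive amenability. As is standard in this literature, a $G$-invariant mean on $\mathscr{P}_f(X)$ giving full weight to supersets of every finite set is equivalent to the existence of a mean $m$ on $\mathscr{P}_f(X)$ that is invariant under the whole semidirect product $\mathscr{P}_f(X)\rtimes G$, where $\mathscr{P}_f(X)$ acts on itself by symmetric difference. The full-weight property is precisely what promotes $G$-invariance to invariance under the abelian subgroup $\mathscr{P}_f(X)$.

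Next I would identify the stabilizer. Restricting $m$ to $\Delta$ yields a $\Delta$-invariant mean on $\mathscr{P}_f(X)$ for the action $(E,g)\cdot F = E\,\triangle\,(g\cdot F)$. The stabilizer of $\varnothing$ under this action is $\{(E,g):E=\varnothing\}=\{\varnothing\}\times G$, so its intersection with $\Delta$ is exactly $N := \Delta\cap(\{\varnothing\}\times G)$, which is amenable by hypothesis. I would then invoke the general principle that a group acting on a set with an invariant mean and amenable point-stabilizer is itself amenable, via a F{\o}lner-averaging (Reiter) construction: given a F{\o}lner sequence $(F_n)$ in $N$, one averages it against $m$ to produce F{\o}lner sets for $\Delta$.

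The hard part will be this last step, because the $\Delta$-orbit of $\varnothing$ need not have positive $m$-measure, so one cannot naively reduce to a transitive action with invariant mean on the orbit. The resolution is to exploit the full $\mathscr{P}_f(X)\rtimes G$-invariance of $m$ (not merely $\Delta$-invariance) to carry out the F{\o}lner averaging inside $\Delta$; this is precisely the technical content packaged in \cite[Corollary~1.4]{JMBMdlS18}, so the cleanest route is simply to cite it.
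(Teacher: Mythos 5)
Your proposal is correct and matches the paper, which proves this proposition by citing Corollary~1.4 of \cite{JMBMdlS18} verbatim; your sketch of the mechanism (reformulating extensive amenability as invariance of a mean under $\mathscr{P}_f(X)\rtimes G$, identifying $\{\varnothing\}\times G$ as the stabilizer of $\varnothing$, and correctly flagging that the non-transitivity issue is what the cited corollary resolves) is accurate but not required, since the paper itself offers no further argument.
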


\begin{remark}\label{rem:cocycle}
As mentioned in Remark 1.5 in \cite{JMBMdlS18}, a special case of the above proposition (already used in \cite{JM13}) is when we can construct an embedding $G\hookrightarrow \mathscr{P}_f(X)\rtimes G$, $g\mapsto (c_g,g)$ such that the subgroup $\{g\in G :  c_g=\varnothing\}\leq G$ is amenable. We call the map $G\rightarrow \mathscr{P}_f(X)$, $g\mapsto c_g$ a \emph{$\mathscr{P}_f(X)$-cocycle with amenable kernel}. The existence of such a cocycle implies the amenability of $G$ by Proposition \ref{prop:subgramen}.
\end{remark}

In this section we will consider a virtually $\mathbf{Z}$ group $G$ acting minimally on a compact Hausdorff topological space $C$ by homeomorphisms. Our goal is to prove the amenability of the topological full group $[[G\acts C]]$ using the above method, i.e., we would like to construct a cocycle on $[[G\acts C]]$ with an amenable kernel.

We will always think of $G$ as the set $\mathbf{Z}\times Q$ with the multiplication
\[(a,x) (b,y)=(f(x,y)+a+b^{\alpha_x}, xy) \text{ for } a,b\in \mathbf{Z}, x,y\in Q,\]
as described in Definition \ref{def:virtZ}.

\subsection{Action on an orbit}

Take a point $p$ in the space $C$. Since the action of $G$ is minimal, the orbit of $p$ is dense. This means in particular that the stabilizer $G_p$ has infinite index in the virtually $\mathbf{Z}$ group $G$, so it is finite by Lemma \ref{le:virtZsubgr}. Let us denote the orbit of $p$ by $X$, consider the action $G\acts X$ that is the restriction of the action on $C$. Let us define the map
\begin{align*}
\varepsilon_p\colon [[G\acts C]] & \hooklongrightarrow  \mathrm{PW}(G\acts X),\\
\varphi & \longmapsto  \varphi_{\big|X}.
\end{align*}
Since $X$ is dense in $C$, the action of $\varphi$ on $X$ determines $\varphi$ on $C$, so this map is injective. 

In many cases it will be more convenient to work with the piecewise group $\mathrm{PW}(G\acts X)$ instead of the topological full group $[[G\acts C]]$.

\vspace{4mm}

We will use the following technical lemma and remark in the construction of our cocycle.

\begin{lemma}\label{le:stabilizer}
If $G$ is a virtually $\mathbf{Z}$ group acting on a compact space $C$ minimally, then we can find a point $p\in C$ such that the normalizer $N_G(G_p)$ is infinite.
\end{lemma}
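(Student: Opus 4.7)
The plan is to exploit the canonical normal subgroup $N:=\iota(\mathbf{Z})=\mathbf{Z}\times\{e_Q\}$ together with the subgroup $G^+:=\pi^{-1}(\ker\alpha)$, which has index $1$ or $2$ in $G$ and contains $N$. A direct computation from Definition \ref{def:virtZ} shows that $N$ centralizes $G^+$ elementwise: for $(a,y)\in G^+$ and $(k,e_Q)\in N$ one gets $(k,e_Q)(a,y)(-k,e_Q)=(a,y)$, since $(-k)^{\alpha_y}=-k$ when $\alpha_y=+1$. The same calculation shows that every $g=(a,y)\in G\setminus G^+$ inverts $N$, meaning $gng^{-1}=n^{-1}$ for all $n\in N$. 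Consequently any subgroup $H\leq G^+$ satisfies $N\leq C_G(H)\leq N_G(H)$, so $N_G(H)$ is infinite. The task therefore reduces to finding $p\in C$ with $G_p\leq G^+$; equivalently, such that no element of $G\setminus G^+$ fixes $p$.

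If $C$ is finite the claim is immediate: minimality gives $|C|=[G:G_p]$, forcing $G_p$ to be of finite index in the infinite group $G$ and hence itself infinite by Lemma \ref{le:virtZsubgr}, which already makes $N_G(G_p)\supseteq G_p$ infinite. Assume henceforth that $C$ is infinite. Then every $G$-orbit is infinite, so Lemma \ref{le:virtZsubgr} makes every stabilizer finite; since $N\cong\mathbf{Z}$ is torsion-free, this forces $G_p\cap N=\{e_G\}$ for every $p$, i.e., $N$ acts freely on $C$. The plan is then to apply the Baire category theorem to the countable family $\{\mathrm{Fix}(g) : g\in G\setminus G^+\}$ of closed subsets of $C$: if each is nowhere dense, their union is meager, and since compact Hausdorff spaces are Baire, its complement is nonempty and supplies the desired $p$.

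The key step, and main obstacle, is showing that $\mathrm{Fix}(g)$ has empty interior for every $g\in G\setminus G^+$. Suppose for contradiction a nonempty open $U\subseteq\mathrm{Fix}(g)$ exists, and pick $u\in U$. The inversion relation $gng^{-1}=n^{-1}$ yields $g(nu)=(gng^{-1})(gu)=n^{-1}u$ for all $n\in N$, so $nu\in\mathrm{Fix}(g)$ would force $n^2u=u$; freeness of $N$ then gives $n=e_G$. Hence $Nu\cap U=\{u\}$. To rule this out I would invoke the standard fact that, since $N\triangleleft G$ with $G/N$ finite, the $N$-minimal closed subsets of $C$ form a finite clopen partition of $C$ transitively permuted by $G/N$. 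The component $Z\ni u$ carries a minimal free $N$-action on an infinite set and is therefore perfect, so the dense orbit $Nu\subseteq Z$ accumulates at $u$, contradicting $Nu\cap U=\{u\}$. This completes the plan.
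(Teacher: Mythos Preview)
Your proof is correct and takes a genuinely different route from the paper's.

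The paper argues by contradiction: assuming every normalizer $N_G(G_q)$ is finite, it writes $C$ as a countable union of the closed sets $\xi(K)=\{q:K\leq G_q\}$ over the finite subgroups $K$ occurring as stabilizers, invokes Baire to find one with nonempty interior, and then observes that the orbit of any point with stabilizer $K_0$ meets $\mathrm{int}(\xi(K_0))$ in only finitely many points, contradicting minimality. Your argument instead exploits the explicit structure of virtually $\mathbf{Z}$ groups: you single out the index-$\leq 2$ subgroup $G^+=\pi^{-1}(\ker\alpha)$ centralized by $N$, and use Baire on the fixed-point sets $\mathrm{Fix}(g)$ for $g\in G\setminus G^+$, showing each has empty interior via the inversion relation $gng^{-1}=n^{-1}$ and the freeness of the $N$-action. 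Both proofs ultimately rest on Baire category in a compact Hausdorff space, but the covers are different.

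Your approach actually yields a slightly stronger conclusion: you produce a point $p$ with $G_p\leq G^+$, hence $N$ itself (not merely some finite-index subgroup $k\mathbf{Z}\times\{e_Q\}$) centralizes $G_p$. This is precisely what Remark~\ref{rem:stabilizer} extracts afterward by passing to a finite-index subgroup of $\mathbf{Z}$, so your argument makes that passage unnecessary. Two small points worth making explicit in a final write-up: (i) the fact that a minimal $N$-set which is infinite compact Hausdorff has no isolated points (otherwise the isolated points form a nonempty open invariant set whose complement is closed invariant and hence empty, forcing discreteness and finiteness); and (ii) that $Nu$ then meets $(U\cap Z)\setminus\{u\}$, which is nonempty open in $Z$, by density. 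Both are standard, and your sketch clearly points to them.
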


\begin{proof}
Suppose that for all points $q\in C$, the normalizer $N_G(G_q)$ is finite. The stabilizer of $g\cdot q$ is $gG_qg^{-1}$, so the finiteness of all normalizers implies that for every $q$, there are only finitely many points in the orbit of $q$ that have the same stabilizer as $q$, and all the others have conjugate stabilizers.

Consider the map from $\mathrm{Sub}_f(G)=\{K\leq G : K\text{ is finite}\}$ to $\mathscr{C}(C)=\{C'\subseteq C :  C' \text{ is closed}\}$ that gives us the set of points stabilized by a certain subgroup. For $K\leq G$, let
\[\xi(K)=\{q\in C :  g\cdot q=q \text{ for every } g\in K\}. \]
This is indeed a closed set in $C$, since $G$ acts by homeomorphisms. Note that $q\in \xi(K)$ implies $K\leq G_q$. We have 
\[C=\bigcup_{K=G_q \text{ for some } q\in C} \xi(K).\]
Since $\mathrm{Sub}_f(G)$ itself is countable, we can have at most countably many subgroups as stabilizers. So $C$ is the union of countably many closed sets. The Baire category theorem implies that at least one of these sets must have non-empty interior. Let $K_0\leq G$ such that $\mathrm{int}(\xi(K_0))\neq \varnothing$.

Pick a point $q\in C$ such that $K_0=G_q$. As we saw in the beginning of the proof, there are only finitely many points in the orbit of $q$ that are stabilized by $K_0$. Hence this orbit intersects the non-empty open set $\mathrm{int}(\xi(K_0))$ at finitely many points, so it cannot be dense in $C$. This contradicts the minimality of the action.
\end{proof}

\begin{remark}\label{rem:stabilizer}
By Lemma \ref{le:stabilizer}, we can always find a point $p$ such that $N_G(G_p)$ is infinite, so it has finite index in the virtually $\mathbf{Z}$ group $G$ by Lemma \ref{le:virtZsubgr}. Hence its intersection with $\mathbf{Z}\times \{e_Q\}$ is $k\mathbf{Z}\times \{e_Q\}$ for some $k\in \mathbf{N}$. We also know that $G_p$ intersects $\mathbf{Z}\times \{e_Q\}$ trivially, since $G_p$ is finite. Hence $G_p$ and $k\mathbf{Z}\times \{e_Q\}$ normalize each other and their intersection is trivial, so they commute. Therefore, we can assume (perhaps by passing to a finite index subgroup of $\mathbf{Z}$) that $G_p$ commutes with the normal subgroup $\mathbf{Z}\times \{e_Q\}$ in $G$. We will use this assumption later.
\end{remark}

\begin{definition}[Generators]\label{def:genset}
As before, let $G$ be a virtually $\mathbf{Z}$ group acting transitively on the space $X$, and let $p\in X$ be a point and $H=G_p$ its stabilizer. Using the notations of Definition \ref{def:virtZ}, $H$ intersects $(\mathbf{Z}\times \{e_Q\})\leq G$ trivially. With $\pi$ denoting the projection map $G\rightarrow Q$, the subgroup $\pi(H)$ is isomorphic to $H$. Let $m$ be the index of $\pi(H)$ in $Q$, and let $\{x_1,\dots, x_m\}$ be a system of coset representatives of $\pi(H)$ in $Q$ such that $x_1=e_Q$. Let us introduce the notations
\begin{align*}
\tau & =(1,e_Q)\in G,\\
\sigma_i & =(0,x_i)\in G \text{ for } i\in[1,m].
\end{align*}
The choice of the generating set $S=\{\tau,\sigma_2,\dots, \sigma_m\}$ ensures that the Schreier graph $\mathrm{Sch}(G,H,S)$ connected. In other words the graph of the action of $G$ on $X$ is connected with generating set $S$. (Since $\sigma_1=e_G$ it is not necessary to have this element among the generators, but this notation will be convenient in some calculations.)
\end{definition}

\begin{lemma}\label{le:genset}
We have $X=G\cdot p=\{\tau^k \sigma_i  \cdot p=(k,x_i)\cdot p  : k\in \mathbf{Z}, i\in [1, m]\}$. Furthermore $\tau^k \sigma_i \cdot p=\tau^\ell \sigma_j \cdot p$ if and only if $(k,i)=(\ell,j)$.
\end{lemma}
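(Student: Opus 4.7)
The plan is to prove the two assertions in sequence: first that every orbit point is of the claimed form, and second that the parametrization by $(k,i)$ is unique.

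First I would verify the formula $\tau^k \sigma_i = (k, x_i)$ directly from the multiplication rule in Definition \ref{def:virtZ}. Since $\pi(e_G) = e_Q$ and $f(e_Q, y) = f(y, e_Q) = 0$ with $\alpha_{e_Q} = \mathrm{id}$, a short induction on $|k|$ gives $\tau^k = (k, e_Q)$, and then $(k,e_Q)(0,x_i) = (f(e_Q, x_i) + k + 0^{\alpha_{e_Q}}, e_Q x_i) = (k, x_i)$. So the claimed set in the lemma coincides with $\{(k, x_i) \cdot p : k \in \mathbf{Z},\, i \in [1,m]\}$.

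For the inclusion $G \cdot p \subseteq \{(k, x_i)\cdot p\}$, take any $(a,y) \in G$ and use the coset decomposition $Q = \bigsqcup_{i=1}^m x_i \pi(H)$ to find the unique $i$ with $x_i^{-1} y \in \pi(H)$. Picking an $h \in H$ with $\pi(h) = x_i^{-1} y$ and writing $h = (b, x_i^{-1}y)$, one can solve $(a,y) = (k, x_i)(b, x_i^{-1}y)$ for $k \in \mathbf{Z}$ using the explicit product formula; then $(a,y) \cdot p = (k, x_i) \cdot (h \cdot p) = (k, x_i) \cdot p$ because $h \in H = G_p$. The reverse inclusion is trivial.

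For uniqueness, suppose $(k, x_i)\cdot p = (\ell, x_j)\cdot p$, i.e.\ $(\ell, x_j)^{-1}(k, x_i) \in H$. From the multiplication rule, this product has second coordinate $x_j^{-1} x_i$, so $x_j^{-1} x_i \in \pi(H)$; but then $x_i$ and $x_j$ lie in the same left coset of $\pi(H)$, which by the choice of coset representatives forces $i = j$. Consequently $(\ell, x_j)^{-1}(k, x_i) \in H \cap (\mathbf{Z} \times \{e_Q\})$, and by Definition \ref{def:genset} this intersection is trivial, so the element is $e_G = (0, e_Q)$; unpacking this equality of first coordinates yields $k = \ell$.

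The only step with any subtlety is the extraction of the first coordinate in the uniqueness argument, where one has to handle the cocycle $f$ and the twisting $\alpha_{x_j}$ correctly; however, once we know $i = j$ so that $z := x_j^{-1} x_i = e_Q$, the normalizations $f(x, e_Q) = f(e_Q, x) = 0$ collapse the formula and the calculation becomes immediate. Everything else is essentially bookkeeping with the semidirect-like multiplication of Definition \ref{def:virtZ}.
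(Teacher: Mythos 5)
Your proof is correct and follows essentially the same route as the paper: identify $\tau^k\sigma_i$ with $(k,x_i)$, use the coset decomposition of $Q$ modulo $\pi(H)$ to get surjectivity, and for uniqueness push the element $(\ell,x_j)^{-1}(k,x_i)\in H$ through $\pi$ to force $i=j$ and then use $H\cap(\mathbf{Z}\times\{e_Q\})=\{e_G\}$ to get $k=\ell$. You actually spell out the surjectivity step (solving $(a,y)=(k,x_i)h$ with $h\in H$) more explicitly than the paper does, which is fine.
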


\begin{proof}
Observe that
\begin{align*}
\tau \sigma_i & =(1,e_Q) (0,x_i)=(1,x_i),\\
\sigma_i \tau & =(0,x_i) (1,e_Q)=(1^{\alpha_{x_i}},x_i)=\tau^{\pm 1} \sigma_i.
\end{align*}
Hence $G\cdot p=\{\tau^k \sigma_i  \cdot p=(k,x_i)\cdot p  : k\in \mathbf{Z}, i\in [1, m]\}$. If  $\tau^k \sigma_i \cdot p=\tau^\ell \sigma_j \cdot p$, then $\sigma_j^{-1}\tau^{k-\ell}\sigma_i=\tau^{\pm (k-\ell)}\sigma_j^{-1}\sigma_i\in H$. By taking the $\pi$-image we have 
\[\pi(\tau^{\pm (k- \ell)}\sigma_j^{-1} \sigma_i)=x_j^{-1}x_i\in \pi(H).\]
Since these are coset representatives of $\pi(H)$, we have $x_i=x_j$ and $i=j$. Hence $\sigma_i=\sigma_j$, and $\tau^{k-\ell}\in H$ implies $k=\ell$. 

Therefore, $\tau^k \sigma_i \cdot p=\tau^\ell \sigma_j \cdot p$ if and only if $(k,i)=(\ell,j)$.
\end{proof}

\subsection{Definition of the cocycle}

\begin{definition}\label{def:cocycle}
Let $G$ be a virtually $\mathbf{Z}$ group acting minimally on the compact space $C$, and let $p\in C$ with the assumption of Remark \ref{rem:stabilizer}. Let $X=G\cdot p$ as before. Let us define $A\subseteq X$ as follows.
\[A=\{ (k,x_i)\cdot p  :  k\in\mathbf{Z}, i\in [1,m], k^{\alpha_{x_i}}\geq 0\}\]
\end{definition}

Observe that $A$ is a collection of `half lines'. Whenever $\alpha_{x_i}$ is the identity, $A$ contains the positive half line $\{(k,x_i)\cdot p  : k\in \mathbf{N}\}$, and when $\alpha_{x_i}=-1$, it contains the negative half line $\{(k,x_i)\cdot p  :  -k\in \mathbf{N}\}$.

\begin{lemma}\label{le:cocycle1}
For every $g\in G$, the set $gA \setminus A\subset X$ is finite.
\end{lemma}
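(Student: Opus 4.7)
The plan is to analyze the action of $g$ on $A$ fiber-by-fiber, exploiting the product structure of $X$ given by Lemma \ref{le:genset}. I would split $A = \bigsqcup_{i=1}^{m} A_i$, where
\[
A_i = \{(k,x_i)\cdot p : k^{\alpha_{x_i}} \geq 0\}
\]
is a ``half-line'' inside the fiber $F_i = \{(k,x_i)\cdot p : k \in \mathbf{Z}\}$; by Lemma \ref{le:genset}, the fibers $F_i$ are disjoint and partition $X$. It therefore suffices to show that each $gA_i \setminus A$ is finite.

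Fix $g = (a,y) \in G$ and $i \in [1,m]$. Using the decomposition $Q = \bigsqcup_{j} x_j \pi(H)$, I would write $y x_i = x_{j(i)} h_i$ with $j(i) \in [1,m]$ and $h_i \in \pi(H)$, and lift $h_i$ to an element $(b_i, h_i) \in H$. A direct calculation with the multiplication rule of Definition \ref{def:virtZ} should then yield
\[
g \cdot (k,x_i) \cdot p = \bigl(k^{\alpha_y} + C(i),\, x_{j(i)}\bigr) \cdot p
\]
for some integer $C(i)$ depending on $g$ and $i$ but not on $k$. In particular, $gA_i \subseteq F_{j(i)}$, and within $F_{j(i)}$ the set $gA_i$ is a half-line of ``direction'' $\alpha_y \alpha_{x_i}$ shifted by the constant $C(i)$.

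The crucial step is matching directions: I claim $\alpha_{x_{j(i)}} = \alpha_y \alpha_{x_i}$. Applying the homomorphism $\alpha$ to $y x_i = x_{j(i)} h_i$ gives $\alpha_y \alpha_{x_i} = \alpha_{x_{j(i)}} \alpha_{h_i}$, so I only need $\alpha_{h_i} = 1$. This is exactly where the commutation assumption from Remark \ref{rem:stabilizer} is used: from $(b,h)(n,e_Q) = (n,e_Q)(b,h)$ for all $n \in \mathbf{Z}$ and all $(b,h) \in H$, the multiplication rule yields $b + n^{\alpha_h} = n + b$, hence $\alpha_h = 1$ for every $h \in \pi(H)$. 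Once directions agree, $gA_i$ and $A_{j(i)}$ are two half-lines in $F_{j(i)}$ of the same direction differing only by the shift $C(i)$, so $gA_i \setminus A_{j(i)}$ is finite (of size at most $|C(i)|$). Since $gA_i \subseteq F_{j(i)}$ and $A \cap F_{j(i)} = A_{j(i)}$, we have $gA_i \setminus A = gA_i \setminus A_{j(i)}$, and
\[
gA \setminus A = \bigsqcup_{i=1}^{m} \bigl(gA_i \setminus A_{j(i)}\bigr)
\]
is a finite union of finite sets.

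The main obstacle is not conceptual but bookkeeping: one must carefully track how $g = (a,y)$ permutes the fibers $F_i$ via the induced permutation $j_g$ on $Q/\pi(H)$ and how the cocycle $f$ together with the chosen lifts $b_i$ combine to produce the constant $C(i)$. The role of Remark \ref{rem:stabilizer} is essential; if $\alpha$ were non-trivial on $\pi(H)$, the half-line $gA_i$ could point in the opposite direction to $A_{j(i)}$ inside $F_{j(i)}$, and their set-theoretic difference would be cofinite rather than finite.
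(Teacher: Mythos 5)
Your proposal is correct and follows essentially the same route as the paper: decompose $A$ into the half-lines $A_i$, compute $g\cdot(k,x_i)\cdot p=(k^{\alpha_y}+C(i),x_{j})\cdot p$ via the stabilizer, and use the commutation assumption of Remark \ref{rem:stabilizer} to get $\alpha_{x_j}=\alpha_y\alpha_{x_i}$ so that $gA_i$ and $A_j$ are half-lines of the same direction differing by a bounded shift. The paper phrases the last step as a finiteness count on the set of $k$ violating the defining inequality of $A$, but this is the same argument in different words.
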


\begin{proof}
Let $g=(\ell,y)$ be an arbitrary element of $G$. We have
\[g\cdot ((k,x_i)\cdot p)=(\ell,y)(k,x_i)\cdot p=(f(y,x_i)+\ell+k^{\alpha_y}, yx_i)\cdot p.\]
There exists a unique $j\in [1,m]$, such that $yx_i\pi(H)=x_j\pi(H)$ and a unique $(t,z)=h\in H$, such that $yx_iz=x_j$. So we have
\[(f(y,x_i)+\ell+k^{\alpha_y}, yx_i)\cdot p=(f(y,x_i)+\ell+k^{\alpha_y}, yx_i)(t,z)\cdot p=\]
\[=(f(yx_i,z)+f(y,x_i)+\ell+k^{\alpha_y}+t^{\alpha_{yx_i}},x_j)\cdot p.\]
We are going to use the assumption that $H$ commutes with $\mathbf{Z}\times \{e_Q\}$, mentioned in Remark \ref{rem:stabilizer}. This implies that $\alpha(\pi(H))=\{+1\}$, so whenever $x$ and $x'$ are in the same coset of $\pi(H)$ in $Q$, we have $\alpha_x=\alpha_{x'}$. Now we know that $yx_i$ and $x_j$ are in the same coset, so $\alpha_{yx_i}=\alpha_{x_j}$. Therefore, if the element above is not inside $A$, then \[0>(f(yx_i,z)+f(y,x_i)+\ell+k^{\alpha_y}+ t^{\alpha_{yx_i}})^{\alpha_{yx_i}}= f(yx_i,z)^{\alpha_{yx_i}}+f(y,x_i)^{\alpha_{yx_i}}+ \ell^{\alpha_{yx_i}} +k^{\alpha_{x_i}}+t.\]
For a fixed $g$ and fixed $i\in [1,m]$, the number $f(yx_i,z)^{\alpha_{yx_i}}+f(y,x_i)^{\alpha_{yx_i}}+ \ell^{\alpha_{yx_i}} +t$ is also fixed. For $i\in [1,m]$ let $A_i=\{ (k,x_i)\cdot p :  k\in \mathbf{Z}, k^{\alpha_{x_i}}\geq 0\}$. The previous calculations show that
\[ |gA_i\setminus A | =|\{k\in \mathbf{Z} :  k^{\alpha_{x_i}}\geq 0 \text{ and } f(yx_i,z)^{\alpha_{yx_i}}+ f(y,x_i)^{\alpha_{yx_i}}+\ell^{\alpha_{yx_i}} +k^{\alpha_{x_i}}+t < 0 \}|, \]
which is finite. Hence,
\[gA\setminus A=\bigcup_{i\in [1,m]} (gA_i\setminus A)\]
is also a finite set.
\end{proof}

\begin{proposition}\label{prop:cocycle2}
For every piecewise map $\varphi\in \mathrm{PW}(G\acts X)$, the set $A\triangle \varphi(A)$ is finite.
\end{proposition}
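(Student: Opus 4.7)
The plan is to reduce the proposition to Lemma \ref{le:cocycle1} by exploiting the characterization of piecewise maps given in Remark \ref{rem:piecewise}: namely, that $\varphi \in \mathrm{PW}(G \acts X)$ if and only if there exists a finite set $S \subseteq G$ with $\varphi(x) \in S \cdot x$ for every $x \in X$. With this in hand, the proof should be a short formal argument rather than a new calculation; the real combinatorial work has already been absorbed into Lemma \ref{le:cocycle1}.

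First I would bound $\varphi(A) \setminus A$. Fix a finite $S \subseteq G$ as above; then $\varphi(A) \subseteq \bigcup_{s \in S} sA$, so
\[
\varphi(A) \setminus A \;\subseteq\; \bigcup_{s \in S} (sA \setminus A).
\]
Each set $sA \setminus A$ is finite by Lemma \ref{le:cocycle1}, and the union is over the finite index set $S$, so $\varphi(A) \setminus A$ is finite.

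For the other half $A \setminus \varphi(A)$ I would use that $\mathrm{PW}(G \acts X)$ is a group (again from Remark \ref{rem:piecewise}): the inverse $\varphi^{-1}$ is also a piecewise map, so there is a finite set $S' \subseteq G$ with $\varphi^{-1}(y) \in S' \cdot y$ for every $y \in X$. Since $\varphi$ is a bijection, $x \in \varphi(A)$ if and only if $\varphi^{-1}(x) \in A$, so $\varphi^{-1}$ restricts to a bijection from $A \setminus \varphi(A)$ onto $\varphi^{-1}(A) \setminus A$. Hence
\[
|A \setminus \varphi(A)| \;=\; |\varphi^{-1}(A) \setminus A| \;\leq\; \sum_{s' \in S'} |s'A \setminus A|,
\]
which is finite by the same application of Lemma \ref{le:cocycle1}. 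Combining the two estimates gives that $A \triangle \varphi(A)$ is finite.

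I do not anticipate a genuine obstacle. The only conceivable subtle point is the symmetric treatment of $A \setminus \varphi(A)$, which at first glance seems to require control of $\varphi^{-1}(A)$ rather than translates $sA$; but this is handled in a single line by invoking the fact that the piecewise group is closed under inverses, a statement that is itself immediate from the characterization in Remark \ref{rem:piecewise}. All of the genuine arithmetic with the cocycle $f$ and the homomorphism $\alpha$ was already carried out inside Lemma \ref{le:cocycle1}, so the present proposition is essentially bookkeeping on top of it.
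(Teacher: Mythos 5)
Your proposal is correct and follows essentially the same route as the paper: both bound $\varphi(A)\setminus A$ by $\bigcup_{s}(sA\setminus A)$ using the finite-set characterization of piecewise maps together with Lemma \ref{le:cocycle1}, and both handle $A\setminus\varphi(A)$ by applying the same argument to $\varphi^{-1}$. Your explicit remark that $\varphi$ restricts to a bijection between $\varphi^{-1}(A)\setminus A$ and $A\setminus\varphi(A)$ just spells out a detail the paper leaves implicit.
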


\begin{proof}
There exists a finite set $T=\{g_1,\dots, g_t\}\subset G$, such that for every $h\in G$, $\varphi(h\cdot p)\in Th\cdot p$. We have the following inclusion.
\[\varphi(A)\setminus A\subseteq \left(\bigcup_{i=1}^t g_iA \right) \setminus A=\bigcup_{i=1}^t (g_iA\setminus A).\]
By Lemma \ref{le:cocycle1}, the sets on the right-hand side are all finite, so $\varphi(A)\setminus A$ is also finite. The same argument works for $\varphi^{-1}(A)\setminus A$, implying that $A\setminus \varphi(A)$ is also finite. Hence $A\triangle\varphi(A)$ is finite.
\end{proof}

Proposition \ref{prop:cocycle2} allows us to define the following embedding.

\begin{definition} For $\varphi\in \mathrm{PW}(G\acts X)$ let $c_{\varphi}=A\triangle \varphi(A)\in \mathscr{P}_f(X)$. The map
\begin{align*}
 [[G\acts C]] \leq \mathrm{PW}(G\acts X) & \hooklongrightarrow \mathscr{P}_f(X)\rtimes \mathrm{PW}(G\acts X),\\
\varphi & \longmapsto (A\triangle \varphi(A), \varphi)
\end{align*}
is an embedding.
\end{definition}

\subsection{Amenable kernel}

This section is dedicated to proving that the kernel of the cocycle $[[G\acts C]]\rightarrow \mathscr{P}_f(X)$, $\varphi\mapsto c_{\varphi}=A\triangle \varphi(A)$ defined in the previous section is amenable. The kernel is the subgroup $\{\varphi\in [[G\acts C]] :  \varphi(A)=A\}$, i.e., the stabilizer of the set $A\subseteq X$ in the full group $[[G\acts C]]$.

\begin{remark}\label{rem:normalmin}
Let $G$ be a group acting on a compact space $C$ minimally. If $N\triangleleft G$ is a finite index normal subgroup, then there exists a minimal $N$-invariant closed subset $C_0\subseteq C$. Note that for any $g\in G$, the set $gC_0$ is also a minimal $N$-invariant closed set, hence it is equal to $C_0$ or they are disjoint. So for any two elements $g_1,g_2\in G$, the sets $g_1C_0$ and $g_2C_0$ are either equal or disjoint. Since $N$ has finite index in $G$, there are finitely many elements $\gamma_1,\dots, \gamma_n\in G$ such that 
\[C=\bigsqcup_{i=1}^n \gamma_i C_0.\]
\end{remark}

We leave the proof of the following lemma to the reader.

\begin{lemma}\label{le:invariantsets}
Let $G$ be a group acting on a topological space $X$ by homeomorphisms. Assume that we can divide the space to finitely many minimal closed $G$-invariant subsets, say $X=\bigsqcup_{i=1}^n X_i$. Now let $Y\subseteq X$ be an open or a closed $G$-invariant set. Then there exists $I\subseteq [1,n]$, such that $Y=\bigsqcup_{i\in I} X_i$.
\end{lemma}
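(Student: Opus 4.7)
The plan is to reduce to the closed case and then invoke minimality componentwise.

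First I would handle the case when $Y$ is closed and $G$-invariant. For each $i \in [1,n]$, the intersection $Y \cap X_i$ is a closed $G$-invariant subset of $X_i$. Since $X_i$ is minimal as a closed $G$-invariant set, $Y \cap X_i$ is either $\varnothing$ or all of $X_i$. Setting $I = \{ i \in [1,n] : Y \cap X_i = X_i \}$ and using that the $X_i$ cover $X$, we get
\[Y = \bigsqcup_{i=1}^{n} (Y \cap X_i) = \bigsqcup_{i \in I} X_i,\]
as desired.

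For the case when $Y$ is open and $G$-invariant, observe that $X \setminus Y$ is closed and $G$-invariant, so by the previous paragraph there exists $J \subseteq [1,n]$ with $X \setminus Y = \bigsqcup_{j \in J} X_j$. Taking complements inside $X = \bigsqcup_{i=1}^n X_i$ yields $Y = \bigsqcup_{i \in [1,n] \setminus J} X_i$, which is of the required form with $I = [1,n] \setminus J$.

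There is no real obstacle here; the only mild subtlety is that one should note $Y \cap X_i$ is automatically closed in $X$ (hence in $X_i$) when $Y$ is closed, which is what lets us apply the minimality of $X_i$. The openness hypothesis in the second case is used only to ensure that $X \setminus Y$ is closed so that the first case applies.
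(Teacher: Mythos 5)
Your proof is correct, and since the paper explicitly leaves this lemma to the reader, your argument is exactly the intended one: reduce the open case to the closed case by complementation, then use minimality of each $X_i$ to force $Y \cap X_i \in \{\varnothing, X_i\}$. Nothing to add.
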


\begin{definition}
Let $G$ be a group acting on the space $C$. Let $D\subset G$ be a finite set containing the identity element. For an element $\varphi\in \mathrm{PW}(G\acts C)$ and for two points $q_1,q_2\in C$, we say that the \emph{$\varphi$-action is the same on the $D$-neighborhood of $q_1$ and $q_2$}, if for every $d\in D$, $\varphi$ acts by the same element of $G$ on $d\cdot q_1$ and on $d\cdot q_2$ (i.e., there exists $g\in G$ such that $\varphi(d\cdot q_1)=gd\cdot q_1$ and $\varphi(d\cdot q_2)=gd\cdot q_2$).
\end{definition}

\begin{lemma}\label{le:upp}
Let $G$ be a virtually $\mathbf{Z}$ group acting minimally on the compact space $C$, and let $p\in C$ be as before.

For every finite subset $F\subset [[G\acts C]]$ and every $n\in \mathbf{N}$, there exists $q=q(n,F)\in \mathbf{N}$, such that for every interval $I\subset \mathbf{Z}$ of length $2q$, there exists $t\in I$ such that $[t-n,t+n]\subseteq I$, and for every $\varphi\in F$, the $\varphi$-action is the same on the $[-n,n]\times Q$-neighborhood of $p$ and $(-t,e_Q)\cdot p$.
\end{lemma}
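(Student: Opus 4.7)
The plan is: first construct an open neighborhood $W$ of $p$ in $C$ such that for every $q \in W$ and every $\varphi \in F$ the $\varphi$-action on the $D$-neighborhoods of $p$ and $q$ agrees (where $D = [-n,n]\times Q$), and then use minimality of a $\mathbf{Z}$-subaction to locate translates of the form $(-t,e_Q)\cdot p$ in $W$ with enough room in any given interval.

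To construct $W$: each $\varphi\in F$ lies in $[[G\acts C]]$ and is therefore piecewise given on a finite open partition of $C$ by elements of $G$. For each $d\in D$, pick an open piece $V_{\varphi,d}\ni d\cdot p$ on which $\varphi$ acts as a single element $g_{\varphi,d}\in G$, and set
\[ W \;=\; \bigcap_{\varphi\in F,\ d\in D} d^{-1} V_{\varphi,d}, \]
which is a finite intersection of open neighborhoods of $p$. For any $q\in W$ we have $d\cdot q\in V_{\varphi,d}$, hence $\varphi(d\cdot q)=g_{\varphi,d}\cdot d\cdot q$, the same group element as $\varphi$ uses at $d\cdot p$. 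This is exactly the requirement that the $\varphi$-action is the same on the $D$-neighborhoods of $p$ and $q$.

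To place translates in $W$, apply Remark \ref{rem:normalmin} to the finite-index normal subgroup $\mathbf{Z}\times\{e_Q\}$: the space $C$ decomposes into finitely many minimal closed $(\mathbf{Z}\times\{e_Q\})$-invariant subsets, and the one $C_0$ containing $p$ carries a minimal $\mathbf{Z}$-action. Then $W\cap C_0$ is a non-empty open subset of $C_0$, so by the classical compactness argument (cover $C_0$ by finitely many $\mathbf{Z}$-translates of $W\cap C_0$), the return-time set
\[ E \;=\; \{t\in\mathbf{Z} : (-t,e_Q)\cdot p\in W\} \]
is syndetic: there exists $M\in\mathbf{N}$ such that every integer interval of length $M$ meets $E$. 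Setting $q(n,F):=n+M$ suffices: given an interval $I\subset\mathbf{Z}$ of length $2q$, trimming by $n$ on each side leaves a sub-interval of length $2M\geq M$, which contains some $t\in E$; then $[t-n,t+n]\subseteq I$ and $(-t,e_Q)\cdot p\in W$ gives the required coincidence of $\varphi$-actions for every $\varphi\in F$. The only step requiring any care is passing from minimality of the $G$-action to minimality of a $\mathbf{Z}$-subaction, which is exactly what Remark \ref{rem:normalmin} provides.
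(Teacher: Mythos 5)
Your proof is correct and follows essentially the same strategy as the paper: build an open neighborhood $W$ of $p$ on which the $F$-action is locally constant over the $[-n,n]\times Q$-neighborhood, then restrict to the minimal $(\mathbf{Z}\times\{e_Q\})$-invariant piece $C_0$ from Remark \ref{rem:normalmin} and use compactness to get syndetic return times of $p$ to $W$. The only cosmetic difference is that you invoke minimality of $C_0$ directly to cover it by finitely many translates of $W\cap C_0$, whereas the paper routes the same step through Lemma \ref{le:invariantsets}; both yield the same bound-by-trimming conclusion.
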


\begin{proof}
Let $F\subset [[G\acts C]]$ be a finite subset and $n\in \mathbf{N}$. We can find a finite partition $\mathcal{P}$ of $C$ to clopen (closed and open) sets, such that every $\varphi\in F$ is just acting with an element of $G$ when restricted to an element of $\mathcal{P}$. This implies that there exists an open neighborhood $V$ of $p$, such that for all $j\in [-n,n]$ and all $x\in Q$, the set $(j,x)\cdot V$ is contained in some $P\in \mathcal{P}$. The union
\[\bigcup_{\ell\geq 1}\bigcup_{|r|\leq \ell} (r,e_Q)\cdot V\]
is nonempty, open and $Z$-invariant, where $Z=\mathbf{Z}\times \{e_Q\}\triangleleft G$.
By Remark \ref{rem:normalmin}, there exists a minimal $Z$-invariant closed subset $C_0\subseteq C$ with $p\in C_0$, a natural number $M\in \mathbf{N}$, and group elements $e_G=g_1, g_2,\dots, g_M\in G$, such that 
\[C=\bigsqcup_{i=1}^M g_i\cdot C_0.\]

By Lemma \ref{le:invariantsets}, the nonempty, open and $Z$-invariant set $\bigcup_{\ell\geq 1}\bigcup_{|r|\leq \ell}(r,e_Q)\cdot V$ is the union of some $g_i\cdot C_0$'s. Since it contains $p$, it has a nonempty intersection with $C_0$, hence we have
\[C_0\subseteq \bigcup_{\ell\geq 1}\bigcup_{|r|\leq \ell} (r,e_Q)\cdot V.\]
By the compactness of $C_0$, a finite union already contains it, so there is $\ell\in \mathbf{N}$ such that 
\[C_0\subseteq \bigcup_{|r|\leq \ell} (r,e_Q)\cdot V.\]
Let $q=q(n,F)=\ell+n$. Now if $I\subset \mathbf{Z}$ is an interval of length $2q$, then $I=[s-q,s+q]$ for some $s\in \mathbf{Z}$. We have 
\[C_0=(s,e_Q)\cdot C_0\subseteq \bigcup_{|r|\leq \ell} (s,e_Q)(r,e_Q)\cdot V=\bigcup_{|r|\leq \ell} (s+r,e_Q)\cdot V.\]
Hence there is an integer $t\in [s-\ell,s+\ell]$, such that $p\in (t,e_Q)\cdot V$. By the choice of $q$, we have $[t-n,t+n]\subseteq [s-q,s+q]$. By multiplying with the inverse of $(t,e_Q)$, we get
\[(t,e_Q)^{-1}\cdot p=(-t,e_Q)\cdot p\in V.\]
Thus for all $j\in [-n,n]$ and for all $x\in Q$, both $(j,x)\cdot p$ and $(j,x)(-t,e_Q)\cdot p$ are in $(j,x)\cdot V$, so every element of $F$ acts on them as the same element of $G$ (by the choice of the partition and the open set $V$). In other words, the action of every element of $F$ is the same on the $[-n,n]\times Q$-neighborhoods of $p$ and $(-t,e_Q)\cdot p$.
\end{proof}

The picture we have in mind is the following. We think of $X=G\cdot p$ as a collection of lines, for each $i\in [1,m]$ the corresponding line is $(\mathbf{Z}\times \{x_i\})\cdot p$, and the origin on this line is the point $(0,x_i)\cdot p$. Suppose that we only know the action of $F$ on a neighborhood of the origin on each line (i.e., the $([-n,n]\times \{x_1,\dots, x_m\})$-neighborhood of $p$). Lemma \ref{le:upp} states that on any chosen line in any long enough interval we can find a subinterval of length $2n$, where the action of $F$ is determined by what we know.

Having these lines in mind we introduce a definition to get the coordinate of a given point on the corresponding line.

\begin{definition}
For $q\in X=G\cdot p$, there exists exactly one element of the form $(k,x_i)\in G$, such that $q=(k,x_i)\cdot p$. Let $|q|=|k|$.
\end{definition}

The following lemma shows that there is a universal bound on the distance between the coordinate of a point in $X$ and that of its image under the action of a piecewise $G$ map.

\begin{lemma}\label{le:coorddiff}
Let $\varphi\in \mathrm{PW}(G\acts X)$. Then 
\[ \sup\{ \big| |q|-|\varphi(q)| \big|  :  q\in X\}\]
is finite.
\end{lemma}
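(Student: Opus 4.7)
\medskip

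\textbf{Plan.} The statement amounts to a uniform bound on how much a piecewise map can shift the $\mathbf{Z}$-coordinate of a point in $X$, and the bound is essentially read off the coordinate computation already performed inside Lemma \ref{le:cocycle1}. The key observation is that the Definition \ref{def:virtZ} multiplication shifts the $\mathbf{Z}$-coordinate of $(k, x_i)\cdot p$ by a quantity which, after we reduce back to the chosen coset representatives $\{x_1,\dots,x_m\}$, depends on the applied group element and on $i$ but only through bounded data, and the only $k$-dependent contribution is $k^{\alpha_y}$, whose absolute value equals $|k|$.

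Concretely, first I would use Remark \ref{rem:piecewise} to pick a finite set $S=\{g_1,\dots,g_t\}\subset G$ with $\varphi(q)\in S\cdot q$ for every $q\in X$. It then suffices to prove a bound of the form $||g\cdot q|-|q||\leq M_{g,i}$ valid for every $g\in S$ and every $q\in (\mathbf{Z}\times\{x_i\})\cdot p$, with $M_{g,i}$ independent of $k$; taking the maximum of the finitely many constants $M_{g,i}$ ($g\in S$, $i\in [1,m]$) gives the desired uniform bound.

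For this inner bound I would repeat verbatim the reduction done in the proof of Lemma \ref{le:cocycle1}. Writing $g=(\ell,y)$ and $q=(k,x_i)\cdot p$, let $j\in[1,m]$ and $h=(t,z)\in H$ be uniquely determined by $y x_i z = x_j$; then
\[
g\cdot q \;=\; \bigl(f(yx_i,z)+f(y,x_i)+\ell+k^{\alpha_y}+t^{\alpha_{yx_i}},\ x_j\bigr)\cdot p,
\]
so that $|g\cdot q|$ equals the absolute value of the integer in the first coordinate. The indices $j$, $z$, $t$ and the cocycle values $f(yx_i,z)$, $f(y,x_i)$ depend only on the pair $(g,i)$ and not on $k$. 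Since $\alpha_y\in\{\pm 1\}$, we have $|k^{\alpha_y}|=|k|=|q|$, so the triangle inequality yields
\[
\bigl||g\cdot q|-|q|\bigr| \;\leq\; |f(yx_i,z)|+|f(y,x_i)|+|\ell|+|t| \;=:\; M_{g,i},
\]
which is the required $(g,i)$-uniform bound. Finiteness of $S$ and of $[1,m]$ then yields the theorem.

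I do not anticipate a real obstacle: the proof is a direct bookkeeping of the same calculation appearing in Lemma \ref{le:cocycle1}, the only genuinely used fact being that conjugation by $\tau$ preserves $|k|$ because $\alpha_y\in\{\pm 1\}$, together with finiteness of $H$ and of the set of coset representatives.
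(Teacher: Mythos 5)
Your proof is correct. The core of the argument is the same coordinate computation the paper uses, but you organize the reduction to finitely many cases differently, and the difference is worth noting. The paper first passes through the Schreier graph of $G\acts X$ with the generating set $S=\{\tau,\sigma_2,\dots,\sigma_m\}$ of Definition \ref{def:genset}: it observes that $\varphi$ displaces every point by at most $|\varphi|=\max_q \mathrm{d}(q,\varphi(q))$ edges, bounds the coordinate shift caused by a \emph{single generator} (by $1$ for $\tau$, by $2f_0+\ell_0$ for each $\sigma_i$, where $f_0$ bounds the cocycle and $\ell_0$ the $\mathbf{Z}$-coordinates of $H$), and concludes with the product bound $|\varphi|(2f_0+\ell_0)$. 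You instead bound the coordinate shift of \emph{each element} $g=(\ell,y)$ of the finite set $S_\varphi$ in one shot, via the same reduction-to-coset-representatives computation as in Lemma \ref{le:cocycle1}; the reverse triangle inequality and $|k^{\alpha_y}|=|k|$ then give $\bigl||g\cdot q|-|q|\bigr|\le M_{g,i}$ with $M_{g,i}$ independent of $k$, and a maximum over the finitely many pairs $(g,i)$ finishes. Your route is slightly more direct, since it avoids introducing the graph metric and the (easy but not entirely free) finiteness of $|\varphi|$; what it gives up is only the cleaner universal constant $2f_0+\ell_0$ per generator, which is never needed later --- Notation \ref{not:coorddiff} only uses finiteness of the supremum. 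Both arguments rest on the same two facts: $\alpha_y\in\{\pm1\}$ preserves absolute values, and the correction terms coming from the cocycle and from the stabilizer $H$ range over a finite set.
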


\begin{proof}
Consider the graph of the action $G\acts X$ with the generating set $S$ introduced in Definition \ref{def:genset} (this assures that the graph is connected). Let $\mathrm{d}$ denote the distance function on this graph. The piecewise group $\mathrm{PW}(G\acts X)$ also acts on the graph, take $\varphi\in \mathrm{PW}(G\acts X)$. Since this is a piecewise $G$ map, it can only move points to a limited distance. Let
\[|\varphi |=\max\{\mathrm{d}(q,\varphi(q)) : q\in X\}.\]

This means that we can get to $\varphi(q)$ from $q$ by applying at most $|\varphi |$ many generators. When we multiply a point with the generator $\tau=(1,e)$, the coordinate changes by 1. Now take $\sigma_i=(0,x_i)\in S$. We would like to compute the difference of the coordinates of $q$ and $\sigma_i\cdot q$. Let $q=(n,x_j)\cdot p$, then we have
\[\sigma_i\cdot q=(0,x_i)(n,x_j)\cdot p= (f(x_i,x_j)+n^{\alpha_{x_i}},x_ix_j)\cdot p.\]
There exists a unique $h=(\ell,z)\in H=G_p$ such that $x_ix_jz=x_k$ (for some $k\in [1,m]$).
\[\sigma_i\cdot q= (f(x_i,x_j)+n^{\alpha_{x_i}},x_ix_j)(\ell,z)\cdot p=\]
\[=(f(x_ix_j,z)+f(x_i,x_j)+n^{\alpha_{x_i}}+ \ell^{\alpha_{x_ix_j}}, x_k)\cdot p.\]
The difference of the coordinates is
\[ \big| |q|-|\sigma_i(q)| \big|=\big| | f(x_ix_j,z)+f(x_i,x_j)+n^{\alpha_{x_i}}+ \ell^{\alpha_{x_ix_j}} |- |n| \big| \leq \]
\[\leq | f[x_ix_j,z)|+ | f(x_i,x_j)| + |\ell |.\]
Recall that $f:Q\times Q\to \mathbf{Z}$ is a cocycle. Since $Q$ is finite, there is an upper bound on the absolute values $f$ can take, let us denote this number by $f_0\in \mathbf{Z}$. We also know that $H$ is finite, so there are only finitely many possible values for the first coordinate of an element of $H$, let us denote the maximum absolute value by $\ell_0\in \mathbf{Z}$. We obtained that
\[ \big| |q|-|\sigma_i(q)| \big| \leq 2f_0+\ell_0,\]
and this bound does not depend on the choice of the point $q$. So we have
\[ \big| |q|-|\varphi(q)| \big| \leq |\varphi | (2f_0+\ell_0)\]
for every $q\in X$. This finishes the proof of the lemma.
\end{proof}

\begin{notation}\label{not:coorddiff}
For a finite set $F\subset \mathrm{PW}(G\acts X)$, let us introduce the following notation.
\[d_F=\max\{ \big| |q|-|\varphi(q)| \big|  :  \varphi\in F, \ q\in X\}.\]
By Lemma \ref{le:coorddiff}, we know that this is finite.
\end{notation}

\begin{proposition}\label{prop:amenkernel}
The stabilizer $[[G\acts C]]_{A}$ is locally finite.
\end{proposition}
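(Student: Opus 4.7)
The plan is to take an arbitrary finite symmetric subset $F \subseteq [[G\acts C]]_A$ and prove that $\langle F \rangle$ is finite, which is enough to establish local finiteness. The strategy adapts the scheme of Juschenko--Monod from \cite{JM13} to the virtually $\mathbf{Z}$ setting, with Lemma \ref{le:upp} taking the place of the Kakutani--Rokhlin structure that is available in the $\mathbf{Z}$ case.

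I would first use Notation \ref{not:coorddiff} to record the bound $d = d_F$: every element of $F$ displaces coordinates by at most $d$, and so any word of length at most $k$ in $F$ displaces by at most $kd$. For each prescribed word length $k$ I would then invoke Lemma \ref{le:upp} with the parameter $n \geq kd$. This produces a syndetic set of ``recurrence parameters'' $M_n \subseteq \mathbf{Z}$, with gaps bounded by $2q(n,F)$, such that at each $(-t,e_Q)\cdot p$ with $t \in M_n$ the action of every element of $F$ on the $[-n,n]\times Q$-neighborhood agrees with its action on the corresponding neighborhood of $p$, transported by $(-t,e_Q)$. Because $n \geq kd$, the entire word-level computation of $\varphi\bigl((-t,e_Q)\cdot p\bigr)$ for $\varphi \in \langle F \rangle$ of word length at most $k$ stays inside this matching region. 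Consequently, for each such $\varphi$ there is a single element $g_\varphi \in G$, depending on $\varphi$ but not on $t$, such that $\varphi\bigl((-t,e_Q)\cdot p\bigr) = g_\varphi \cdot (-t,e_Q)\cdot p$ for every $t \in M_n$ sitting sufficiently deep inside its recurrence interval. Informally, $\varphi$ acts at every recurrence point by one and the same group element.

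The crucial step is to upgrade this local uniformity into a finite-data description of $\varphi$, uniformly in $k$. Here the half-line shape of $A$ is decisive: $A$ intersects each line $(\mathbf{Z}\times\{x_i\})\cdot p$ in a half-line based at the boundary point $(0,x_i)\cdot p$, and $M_n$ accumulates on both sides. Combining the bijectivity of $\varphi$ on $X$, its $A$-invariance, the bounded displacement $kd$, and a pigeonhole/counting argument comparing $M_n$ to its translate by $g_\varphi$, one should force $g_\varphi$ to lie in a fixed finite subset of $G$ independent of $k$ (here Remark \ref{rem:stabilizer}, which makes $G_p$ commute with the normal copy of $\mathbf{Z}$, ensures the translation by $g_\varphi$ descends cleanly to a shift on the recurrence line). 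Once $g_\varphi$ is so constrained, $\varphi$ is determined by the choice of $g_\varphi$ together with its restriction to a fixed finite neighborhood of $p$ in $X$ (the portion not covered by the recurrence structure), so $\langle F \rangle$ embeds into a finite symmetric group and is therefore finite.

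The main obstacle is precisely this counting step bounding the admissible $g_\varphi$ uniformly in the word length $k$: it is where the half-line geometry of $A$, the two-ended structure of the Schreier graph coming from the virtually $\mathbf{Z}$ hypothesis, the bijectivity on $X$, and the syndetic recurrence from Lemma \ref{le:upp} must all come together. Handling the possible permutation of the $m$ lines $(\mathbf{Z}\times\{x_i\})\cdot p$ and the sign flips mediated by the cocycle $\alpha$ is the technical subtlety; once this is dealt with, the rest of the proof is routine bookkeeping.
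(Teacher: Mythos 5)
Your opening move is the right one --- combine the displacement bound $d_F$ from Notation \ref{not:coorddiff} with the recurrence statement of Lemma \ref{le:upp} --- but the argument then heads in a direction that does not close. Two concrete problems. First, you let the parameter $n$ in Lemma \ref{le:upp} grow with the word length $k$, so the recurrence points (and the size of the matching neighborhoods) change with $k$; everything you derive is therefore only about words of bounded length, and the passage to a statement about all of $\langle F\rangle$ is exactly the ``main obstacle'' you name but do not resolve. The asserted pigeonhole/counting step that would force $g_\varphi$ into a fixed finite set, and the subsequent claim that $\varphi$ is then ``determined by $g_\varphi$ together with its restriction to a fixed finite neighborhood of $p$,'' are not justified and the latter is false as stated: the $[-n,n]\times Q$-neighborhoods of the recurrence points are syndetic but do not cover $X$, so the portion of $X$ not controlled by the recurrence structure is infinite, and knowing $\varphi$ there is not finite data. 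Second, the actual mechanism by which the $A$-invariance enters is missing: it is not used to constrain a single translation element $g_\varphi$, but rather to prevent points near each recurrence point from crossing it.

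The paper's proof fixes $n=d_F+1$ once and for all, takes one recurrence point $t_k$ in each interval of length $2q(n,F)$, and uses these as cut points to partition each line $(\mathbf{Z}\times\{x_i\})\cdot p$ into finite blocks $B_k$ of uniformly bounded size. Each $B_k$ is shown to be $F$-invariant: points deep inside a block stay inside by the displacement bound $d_F<n$; points within distance $n$ of a cut point $t_k$ are handled by observing that there the action of $\varphi$ agrees (after translation by $(t_k,e_Q)$) with its action near $p$, where the boundary of the half-line set $A$ sits at coordinate $0$, so crossing the cut point would translate into $\varphi$ moving a point of $A$ out of $A$ (or into $A$), contradicting $\varphi(A)=A$. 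Then $\langle F\rangle$ embeds into $\prod_{k\in\mathbf{Z}}\mathrm{Sym}(B_k)$, which is locally finite because $|B_k|$ is uniformly bounded; no global finite-data description of $\varphi$ is needed. If you want to repair your argument, replacing the ``determine $\varphi$ by finite data'' step with this block-decomposition-and-invariance step is the essential missing idea.
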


\begin{proof}
Let $F\subset [[G\acts C]]_{A}$ be a finite subset of the stabilizer. We would like to prove that the subgroup generated by $F$ in $[[G\acts X]]_{A}$ is finite.

By Lemma \ref{le:upp} for $F$ and $n=d_F+1$, we have $q=q(n,F)$. Let $I_0=[-q,q]$, and decompose $\mathbf{Z}$ as the disjoint union of consecutive intervals $\{I_k\}_{k\in \mathbf{Z}}$ of length $2q$. (So for example $I_1=[q+1,3q+1]$, $I_{-1}=[3q-1,q-1]$, etc.) According to the lemma, for each $k\in \mathbf{Z}\setminus \{0\}$, there exists $s_k\in I_k$ such that $[s_k-n,s_k+n]\subseteq I_k$, and for every $\varphi\in F$, the $\varphi$-action is the same on the $[-n,n]\times Q$-neighborhood of $(-s_k,e_Q)\cdot p$ and of $p$. Set $t_k=-s_{-k}$ for $k\in \mathbf{Z}\setminus \{0\}$ and $t_0=0$. This way $[t_k-n,t_k+n]\subseteq -I_{-k}=I_k$, and the $\varphi$-action is the same on the $[-n,n]\times Q$-neighborhood of $(t_k,e_Q)\cdot p$ and of $p$, for every $k\in \mathbf{Z}$.

Now for $k\in \mathbf{Z}$ and $i\in [1,m]$, let
\[B_{k,i}=\{(\ell,x_i)\cdot p :  \ell\in [t_k^{\alpha_{x_i}},(t_{k+1}-1)^{\alpha_{x_i}}] \}=\]
\[= \left( [t_k^{\alpha_{x_i}},(t_{k+1}-1)^{\alpha_{x_i}}] \times \{x_i\} \right) \cdot p \subset (\mathbf{Z}\times \{x_i\})\cdot p. \]
Note that for $\alpha_{x_i}=-1$ this interval becomes $[-t_{k+1}+1,-t_k]$. For $k\in \mathbf{Z}$ we define
\[B_k=\bigcup_{i=1}^m B_{k,i}\subset X.\]

\begin{claim}
For every $k\in \mathbf{Z}$, the finite set $B_k\subset X$ is invariant under the action of $F$.
\end{claim}

\begin{proof}
Let us fix $k\in \mathbf{Z}$. Take arbitrary elements $\varphi\in F$ and $(r,x_i)\cdot p \in B_k$, then there exists a group element $g=(\ell,z)\in G$, such that the action of $\varphi$ on $(r,x_i)\cdot p$ is multiplication by $g$. So we have \[\varphi((r,x_i)\cdot p)=(\ell,z)(r,x_i)\cdot p=(f(z,x_i)+\ell+r^{\alpha_z},zx_i)\cdot p.\]
There is a unique $(L,y)\in H$, such that $zx_iy=x_j$ for some $j\in [1,m]$.
\[\varphi((r,x_i)\cdot p)=(f(z,x_i)+\ell+r^{\alpha_z},zx_i)(L,y)\cdot p=\]
\[=(f(z,x_i)+\ell+r^{\alpha_z}+L^{\alpha_{zx_i}} +f(zx_i,y),x_j)\cdot p.\]
Let $R=f(z,x_i)+\ell+L^{\alpha_{zx_i}}+f(zx_i,y)$, by Lemma \ref{le:coorddiff} we have $|R|\leq d_F=n-1$, since this is the difference of the coordinates of $(r,x_i)\cdot p$ and its $\varphi$-image.
There are three cases.

\begin{enumerate}[label={\arabic*.}]
\item $r\in [(t_k+n)^{\alpha_{x_i}},(t_{k+1}-n-1)^{\alpha_{x_i}}]$,
\item $r\in [t_k^{\alpha_{x_i}},(t_k+n-1)^{\alpha_{x_i}}]$,
\item $r\in [(t_{k+1}-n)^{\alpha_{x_i}},(t_{k+1}-1)^{\alpha_{x_i}}]$.
\end{enumerate}

In each case we conclude that the image of the point $(r,x_i)\cdot p$ stays in the set $B_k$.

\begin{enumerate}[label={\arabic*.}]

\item Note that we have $\alpha_{x_j}=\alpha_{zx_i}= \alpha_z\alpha_{x_i}$, since they are in the same coset of $\pi(H)$ in $Q$. Therefore, in this case we have
\[r^{\alpha_z}\in [(t_k+n)^{\alpha_{x_j}},(t_{k+1}-n-1)^{\alpha_{x_j}}]. \]
Moreover, we know that $|R|< n$, and hence by acting with $\varphi$ we cannot leave the interval, i.e.,
\[r^{\alpha_z}+R \in [(t_k)^{\alpha_{x_j}}, (t_{k+1}-1]^{\alpha_{x_j}} ]. \]
This means that $\varphi((r,x_i)\cdot p)\in B_k$.

\item In this case the point $(r,x_i)\cdot p$ is in the $[-n,n]\times Q$-neighborhood of $(t_k,e_Q)\cdot p$, so we can use that the $\varphi$-action on this neighborhood of $(t_k,e_Q)\cdot p$ is the same as on the $[-n,n]\times Q$-neighborhood of $p$. This, and the fact that $\varphi$ is in the stabilizer of $A$, ensures that the image of $(r,x_i)\cdot p$ stays in $B_k$.

Let $b=r-t_k^{\alpha_{x_i}}$, then $b^{\alpha_{x_i}}\in [0,n]$, so $(r,x_i)\cdot p=(b+t_k^{\alpha_{x_i}},x_i)\cdot p=(b,x_i)(t_k,e_Q)\cdot p$. Now we can see the action of $\varphi$ on $(r,x_i)\cdot p$ the following way.
\begin{align*}
\varphi((r,x_i)\cdot p) & =\varphi((b,x_i)(t_k,e_Q)\cdot p)=\\
=(\ell,z)(b,x_i)(t_k,e_Q)\cdot p & =(\ell,z)(b+t_k^{\alpha_{x_i}},x_i)\cdot p=\\
=(\ell,z)(t_k^{\alpha_{x_i}},e_Q)(b,x_i)\cdot p & =(\ell+(t_k^{\alpha_{x_i}})^{\alpha_z}, z)(b,x_i)\cdot p=\\
= (t_k^{\alpha_{x_i}\alpha_z},e_Q)(\ell,z)(b,x_i)\cdot p & =(t_k^{\alpha_{x_iz}},e_Q)\varphi((b,x_i)\cdot p).
\end{align*}
The last equality is due to the fact that $(b,x_i)\cdot p$ is in the $[-n,n]\times Q$-neighborhood of $p$, this is the corresponding point to $(r,x_i)\cdot p$, so the $\varphi$-action on this point is also multiplication by $g=(\ell,z)$. We have
\begin{align*}
\varphi((b,x_i)\cdot p) & =(t_k^{\alpha_{x_iz}},e_Q)^{-1}\varphi((r, x_i)\cdot p)=\\
=(-t_k^{\alpha_{x_iz}},e_Q)(r^{\alpha_z}+R,x_j)\cdot p & = (r^{\alpha_z}+R-t_k^{\alpha_{x_iz}},x_j) \cdot p.
\end{align*}
Since $\varphi$ is in the stabilizer of the set $A\subset X$, the image of $(b,x_i)\cdot p$ is also in $A$, so we have
\begin{align*}
0\leq ( r^{\alpha_z}+R-t_k^{\alpha_{x_iz}} \big)^{\alpha_{x_j}} & =\\
=\big( (b+t_k^{\alpha_{x_i}})^{\alpha_z}+R-t_k^{\alpha_{x_iz}} \big)^{\alpha_{x_j}} & = b^{\alpha_{x_i}}+R^{\alpha_{x_j}}
\end{align*}
In the second equation we used that from $\alpha_{x_j}=\alpha_{x_iz} =\alpha_{zx_i}=\alpha_{x_i}\alpha_z$ we get $\alpha_z\alpha_{x_j}=\alpha_{x_i}$, since $\mathrm{im}( \alpha) \subseteq \mathrm{Aut}(\mathbf{Z})=\{\pm 1\}$. We also know that $|b|+|R|\leq 2n-1$, so
\[0\leq b^{\alpha_{x_i}}+R^{\alpha_{x_j}}\leq 2n-1.\]
From the choice of the $t_k$'s it is clear that $|t_{k+1}-t_k|\geq 2n$, so we have
\[ [t_k,t_{k+1}-1] \ni t_k+b^{\alpha_{x_i}}+R^{\alpha_{x_j}}= \big( (t_k^{\alpha_{x_i}}+b)^{\alpha_z}+R \big)^{\alpha_{x_j}}= (r^{\alpha_z}+R)^{\alpha_{x_j}},\]
\[ [t_k,t_{k+1}-1]^{\alpha_{x_j}} \ni r^{\alpha_z}+R.\]
Therefore, $\varphi((r,x_i)\cdot p)\in B_k$ holds in the second case as well.

\item In this case we use that the complement of $A\subset X$ is invariant under the action of $\varphi$. Hence, exactly the same ideas and similar calculations as in case 2.~show that $\varphi((r,x_i)\cdot p)\in B_k$.
\end{enumerate}

This proves that $B_k$ is indeed invariant under the action of all elements of $F$.
\end{proof}

Since every $B_k$ is invariant under the action of $F$, we can realize the group $\langle F\rangle$ as a subgroup of $\prod_{k\in \mathbf{Z}} \mathrm{Sym}(B_k)$. By the choice of $t_k\in I_k$, we have that $|t_{k+1}-t_k|\leq 4q$, so $|B_{k,i}|\leq 4q$ for every $i\in [1,m]$, and hence $|B_k|\leq 4qm$ for all $k\in \mathbf{Z}$. This means that there is a uniform bound on the cardinality of the $B_k$'s, so the direct product $\prod_{k\in\mathbf{Z}} \mathrm{Sym}(B_k)$ is locally finite. The group $\langle F\rangle$ is a finitely generated subgroup of this, consequently it is finite.
\end{proof}

\begin{proof}[Proof of Theorem \ref{thm:A}]
The action $\mathrm{PW}(G\acts C) \acts C$ is extensively amenable by Corollary \ref{cor:extamen_locvirtZ}. As $[[G\acts C]]$ is a subgroup of the piecewise group, the action $[[G\acts C]]\acts C$ is also extensively amenable. The kernel of the cocycle $[[G\acts C]]\rightarrow \mathscr{P}_f(X)$, $\varphi \mapsto c_{\varphi}=A\triangle \varphi(A)$ is exactly the stabilizer $[[G\acts C]]_A$. By Proposition \ref{prop:amenkernel} this kernel is locally finite, hence amenable. Therefore, by Proposition \ref{prop:subgramen}, the topological full group $[[G\acts C]]$ is amenable.
\end{proof}

\section{Proof of Theorem \ref{thm:B}}

In this section $G$ will be an infinite, not virtually $\mathbf{Z}$ group. Our goal is to construct a minimal free action of $G$ on a Cantor space such that the topological full group contains a free group.

\begin{notation}
We denote the identity element of $G$ by $e$.

Let $S$ be a symmetric generating set for the group $G$. If $a$ and $b$ are subsets or elements of $G$, then we will denote their distance in the Cayley graph $\mathrm{Cay}(G,S)$ by $\mathrm{d}(a,b)$. For $g\in G$, let $\mathrm{length}(g)$ denote $\mathrm{d}(e,g)$.

Let $k\in \mathbf{N}$. For $x\in G$ let $B_k(x)$ denote the $k$-ball around $x$ in the Cayley graph. For a subset $X\subseteq G$, we will denote its $k$-neighborhood by $B_k(X)$, i.e., $B_k(X)=\bigcup_{x\in X} B_k(x)$.
\end{notation}

We will work with edge colorings of graphs. We introduce the following notion.

\begin{definition}
Let $A,B,C, D_1, D_2, \dots D_k$ denote $k+3$ different colors and let $X\in \{A,B,C,D_1, D_2, \dots , D_k\}$ be one of the colors. An edge coloring of a graph is \emph{$X$-proper}, if there are no adjacent $X$-colored edges in the graph. We will call a coloring \emph{3-proper} if it is $A$-proper, $B$-proper and $C$-proper.

Let $G$ be a group with a symmetric generating set $S$. Let $\Sigma_{G}$ denote the space of 3-proper edge colorings of the Cayley graph $\mathrm{Cay}(G,S)$ by the letters $A,B,C,D, E, F$.
\end{definition}

Note that with the topology of pointwise convergence, $\Sigma_G$ is a compact, metrizable, totally disconnected space. We will consider the natural (left) $G$-action on $\Sigma_G$ defined by translations.

For each color $X\in \{A,B,C\}$, there exists a corresponding continuous involution on $\Sigma_G$, that we will denote by the same letter. On $\sigma\in \Sigma_G$ it is defined as follows: if the vertex $e\in G$ is adjacent to an edge labeled by $X$, then we translate the coloring towards the other endpoint of this edge (i.e., the origin is now at that other vertex), if $e$ has no adjacent edges labeled by $X$ then $ X\cdot \sigma=\sigma$. (This is well-defined since the coloring is 3-proper, so we have at most one $X$-colored edge from every vertex.) This involution is contained in the topological full group $[[G\acts \Sigma_G]]$. Note that the involution preserves any $G$-invariant subset of $\Sigma_G$, so if $M$ is a closed $G$-invariant subset, then $A,B$ and $C$ can also be viewed as elements of the topological full group $[[G\acts M]]$. This gives us a homomorphism from the free product $\Delta=\langle A\rangle \ast \langle B \rangle \ast \langle C\rangle$ to $[[G\acts M]]$. 
Since $\Delta$ is isomorphic to $(\mathbf{Z}/2\mathbf{Z}) \ast (\mathbf{Z}/2\mathbf{Z}) \ast (\mathbf{Z}/2\mathbf{Z})$, it contains the free group on two generators. Our goal is to find a subspace $M$ so that this homomorphism is injective, proving that $[[G\acts M]]$ contains a non-abelian free group.

\vspace{12pt}

We will think of elements of $\Delta$ as words using the letters $A,B,C$, that do not contain two consecutive instances of the same letter. There are no powers or inverses needed, since the generators are all involutions. For $w\in \Delta$, $\mathrm{length}(w)$ denotes its length as a word from the letters $A,B,C$.

Given a coloring $\sigma\in \Sigma_G$, we will say that a word $w\in \Delta$ is \emph{written} along a path $v_0,v_1,v_2,\dots, v_k$, if the length of $w$ is $k$ and the color of the edge $(v_{i-1},v_i)$ is the $i$th letter of $w$.

\begin{definition}
Let $w$ be a word from $\Delta= \langle A\rangle \ast \langle B \rangle \ast \langle C\rangle$. Assume that in a coloring $\sigma$ the word $w$ is written along a path $v_0,v_1,\dots v_k$. We will say that in this appearance of the word $w$, the point $v_i$ is \emph{marked} with the color $X$ (with $X\in \{D,E,F\}$), if all edges going from $v_i$ have the color $X$ except for $(v_{i-1},v_i)$ and $(v_i,v_{i+1})$. (In the case when $i=0$ or $i=k$, then one of these does not exist, so there is only one exception.)

Let $\sigma$ be a 3-proper coloring of $\mathrm{Cay}(G,S)$, i.e., $\sigma\in \Sigma_G$. We will say that $\sigma$ has \emph{property $(P1)$} if for any word $w\in \Delta$, there exists a number $R_{\sigma}(w)\in \mathbf{N}$, such that every ball of radius $R_{\sigma}(w)$ in $\mathrm{Cay}(G,S)$ contains the word $w$ written along a path with the starting point marked with the color $D$ and all other points marked with the color $E$.

We say that $\sigma$ has \emph{property $(P2)$} if for every $g\in G\setminus \{e\}$, there exists a radius $N_{\sigma}(g)\in \mathbf{N}$, such that in every $N_{\sigma}(g)$-ball in $\mathrm{Cay}(G,S)$ we can find an edge $(x,y)$, such that $(x,y)\cdot g$ is also in that $N_{\sigma}(g)$-ball and the color of $(x,y)$ is different from the color of $g\cdot (x,y)$.
\end{definition}

As we will see from the next proposition, property $(P1)$ ensures that the map $\Delta \rightarrow [[G\acts M]]$ is injective, while property $(P2)$ is responsible for the freeness of the action of $G$ on $M$.

\begin{proposition}\label{prop:coloring1}
Let $G$ be a finitely generated infinite group with a finite symmetric generating set $S$. Assume that there exists a coloring $\sigma\in \Sigma_G$ with property $(P1)$. Let $M$ be a non-empty minimal closed $G$-invariant subset of the orbit closure $\overline{ G\cdot \sigma }$. Then $M$ is a Cantor space and the topological full group $[[G\acts M]]$ contains a non-abelian free group.

If we assume furthermore that $\sigma$ has property $(P2)$, then the action of $G$ on $M$ is free.
\end{proposition}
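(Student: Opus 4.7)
The plan is to establish the three assertions in sequence. The crucial preliminary is that both (P1) and (P2) pass from $\sigma$ to every $\tau$ in the orbit closure $\overline{G\cdot \sigma}$, with the same radius functions $R_\sigma$ and $N_\sigma$. Since each property is local (it asserts that a specified pattern appears in a ball of a prescribed radius, using only finitely many edge colors to verify the $D,E$ markings or the color mismatch), I would argue as follows: writing $\tau=\lim_n g_n\cdot\sigma$ in pointwise convergence and enlarging the test ball to a finite region containing all edges needed to witness the property, each $g_n\cdot\sigma$ inherits the property from $\sigma$, and any fixed finite witness then transfers to $\tau$.

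For the embedding $\Delta \hooklongrightarrow [[G\acts M]]$, let $w=X_1X_2\cdots X_k\in \Delta$ be reduced and non-trivial and consider $w^{-1}=X_k\cdots X_1$, which is more convenient for the composition convention ($w^{-1}\neq e$ iff $w\neq e$). Pick any $\tau \in M$ and use the inherited (P1) to find a path $v_0, v_1, \ldots, v_k$ in $\tau$ along which $w$ is written, with $v_0$ marked $D$ and $v_1,\ldots,v_k$ marked $E$. Set $\tau' = v_0^{-1}\cdot \tau$. Since the coloring is $\{A,B,C\}$-proper and the markings make the next path edge the unique outgoing $X_{i+1}$-edge at $v_i$, successive application of $X_1, X_2, \ldots, X_k$ to $\tau'$ is forced to trace the marked path, producing $w^{-1}\cdot\tau' = v_k^{-1}\cdot \tau$. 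The asymmetric endpoint markings now distinguish the two colorings at the origin: $\tau'$ has at least one $D$-colored edge at the origin (since $|S|\geq 2$ forces $v_0$ to have a non-path edge, which is $D$-colored), while $w^{-1}\cdot\tau'$ has none (the origin of $w^{-1}\cdot\tau'$ sits at $v_k$, whose non-path edges are $E$-colored and whose single non-$E$ edge carries color $X_k\in \{A,B,C\}$). Hence $w^{-1}\cdot\tau' \neq \tau'$, so $w$ acts non-trivially. Since $\Delta\cong (\mathbf{Z}/2\mathbf{Z})\ast(\mathbf{Z}/2\mathbf{Z})\ast(\mathbf{Z}/2\mathbf{Z})$ contains a non-abelian free group, so does $[[G\acts M]]$.

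The Cantor structure now follows quickly. $M$ is closed in $\Sigma_G$ and hence compact, metrizable, and totally disconnected. The injection $\Delta\hooklongrightarrow [[G\acts M]]\leq \mathrm{Sym}(M)$ makes $M$ infinite. The set of isolated points of $M$ is open and $G$-invariant; by minimality it is either $\varnothing$ or all of $M$, and the latter would make $M$ discrete, hence finite, which is excluded. So $M$ has no isolated points and is Cantor. Finally, under (P2), for any $g\neq e$ and any $\tau\in M$, the inherited (P2) applied to a ball of radius $N_\tau(g)$ around $e$ produces an edge in $\tau$ whose $g$-translate appears in the same ball with a different color; this is exactly the statement that $\tau$ and $g\cdot\tau$ disagree on some edge, so $g\cdot \tau \neq \tau$, and the $G$-action on $M$ is free.

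I expect the injectivity step to be the main obstacle: one must fix the walking direction consistently with the composition order, use $\{A,B,C\}$-properness together with the markings to force the walk along the marked path at every intermediate vertex, and then exploit the asymmetric $D$ versus $E$ marking to separate $\tau'$ from $w^{-1}\cdot\tau'$. The inheritance of (P1) and (P2), the Cantor conclusion, and the freeness under (P2) are then essentially bookkeeping.
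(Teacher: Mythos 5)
Your proof is correct and follows essentially the same route as the paper: the $D$/$E$ markings force the walk along the written word and distinguish the two endpoint colorings, and (P1), (P2) pass to the orbit closure by locality. The only (harmless) reorganization is that you establish injectivity of $\Delta\to[[G\acts M]]$ first and use it to rule out a finite $M$, whereas the paper excludes finiteness directly by noting that a periodic coloring can contain only finitely many marked words, contradicting $(P1)$; your order of argument works equally well and in fact spells out the final separation of $\tau'$ from $w^{-1}\cdot\tau'$ more explicitly than the paper does.
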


\begin{proof}
Since $M$ is a closed subspace of $\Sigma_G$, it is also compact, metrizable and totally disconnected.

Assume that there exists an isolated point $\theta\in M$. Then all points in the orbit of $\theta$ are also isolated points. Due to the minimality of $M$, the orbit of $\theta$ is dense in $M$. Since it consists of isolated points, we have $M=\overline{G\cdot \theta }=G\cdot  \theta$, so by compactness $M$ is finite.

Let us examine the property $(P1)$. The same property holds for every coloring in the orbit of $\sigma$, since $G$ acts by translations. Let $w$ be a word from $\langle A\rangle \ast \langle B \rangle \ast \langle C\rangle$. Let us take a convergent sequence $(\sigma_n)_{n\in \mathbf{N}}$ of colorings satisfying property $(P1)$, such that for all words $w$ and every $i,j\in \mathbf{N}$ we have $R_{\sigma_i}(w)=R_{\sigma_j}(w)$. In the limit coloring we will see the word $w$ written along a path with the starting point marked with $D$ and other points marked with $E$ in every ball of radius $R_{\sigma_1}(w)$, since this holds for every coloring in the sequence. Hence property $(P1)$ (with the same $R(w)$'s) holds for any coloring in the orbit closure $\overline{G\cdot \sigma}$, including the colorings in $M$.

The finiteness of the orbit $M=G\cdot \theta$ means that the coloring of some finite neighborhood of the identity element determines the whole coloring $\theta$. From property $(P1)$ it is clear that every vertex in the Cayley graph can only be the starting point of one word. So in the coloring $\theta$ we can only see finitely many words from $\langle A\rangle \ast \langle B \rangle \ast \langle C\rangle$ written along paths. This is a contradiction as $M$ cannot be finite, therefore there are no isolated points in $M$. Hence $M$ is a non-empty, compact, metrizable and totally disconnected topological space with no isolated points, so by definition it is a Cantor space.

Now consider the previously mentioned homomorphism $\Delta\to [[G\acts M]]$. Since $\Delta$ contains a non-abelian free group, it is enough to prove that this homomorphism is injective.

Let us take a word $w\in \Delta$. We would like to find an element of $M$ that is not fixed by $w$. Take an arbitrary coloring $\lambda\in M$. Due to property $(P1)$ we can see $w$ written along a marked path in the ball $B_{R_{\lambda}(w)}(e)$ in the coloring $\lambda$, say with endpoint $g\in G$. Consider the coloring $g\cdot \lambda \in M$. Here we can see $w$ written along a path ending at the origin. Since the starting point is marked with the color $D$ and the endpoint with the color $E$, this path cannot be a cycle. Therefore $w\cdot (g \cdot \lambda)\neq g \cdot \lambda$. Hence $w$ does not act as the identity on $M$, so the homomorphism $\Delta \to [[G\acts M]]$ is indeed injective.

Now assume that $\sigma$ has property $(P2)$ and assume that there exists $\lambda\in M$ and $g\in G$ such that $g\cdot \lambda=\lambda$. Let $N=N_{\sigma}(g)$ be the radius for $g$ given by property $(P2)$. Since $\lambda$ is in the closure of $G\cdot \sigma $, we can find a coloring in the orbit of $\sigma$, say $h\cdot \sigma$, such that the coloring of the $N$-ball around the identity is the same in $\lambda$ and $h\cdot \sigma$. In the coloring $h\cdot \sigma$, due to property $(P2)$, there is an edge $(x,y)$ in the $N$-ball that has a different label than $g\cdot (x,y)$, and both edges are in the $N$-ball. On this ball the coloring coincides with $\lambda$, this contradicts the assumption that $g\cdot \lambda=\lambda$. So the action of $G$ on $M$ is free.
\end{proof}

The main idea of the proof of the following proposition was communicated by Tam\'as Terpai.

\begin{proposition}\label{prop:coloring2}
Let $G$ be a finitely generated infinite group that is not virtually $\mathbf{Z}$, with symmetric generating set $S$. Then there exists a coloring $\sigma\in \Sigma_G$ satisfying the properties $(P1)$ and $(P2)$.
\end{proposition}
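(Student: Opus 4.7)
My plan is to construct $\sigma$ as a limit of consistent finite partial colorings, using a diagonal argument in the compact space $\Sigma_G$. Enumerate the reduced words $(w_n)_{n\ge 1}$ of $\Delta$ and the non-identity elements $(g_n)_{n\ge 1}$ of $G$, and fix in advance a schedule of target radii $\rho_i$ and $\nu_i$. At stage $n$ I would define a $3$-proper partial coloring on a ball $B_{r_n}(e)$ (with $r_n \nearrow \infty$) ensuring that, already inside $B_{r_n}(e)$: (i) every subball of radius $\rho_i$ contains a marked appearance of $w_i$ for each $i \le n$, and (ii) every subball of radius $\nu_i$ contains an edge witnessing non-$g_i$-periodicity for each $i \le n$. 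Passing to a pointwise limit produces the desired $\sigma$, with $R_\sigma(w_i) = \rho_i$ and $N_\sigma(g_i) = \nu_i$.

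The combinatorial building block is the \emph{$w$-gadget}: a simple path $v_0, v_1, \ldots, v_k$ of length $k=|w|$ in $\mathrm{Cay}(G,S)$ together with all edges incident to its vertices. On a gadget each path edge is colored by the corresponding letter of $w$, every non-path edge at $v_0$ is colored $D$, and every non-path edge at $v_1, \ldots, v_k$ is colored $E$. Since $w$ is reduced in $\Delta$ and $D, E \notin \{A, B, C\}$, this local coloring is automatically $3$-proper, and a single gadget realizes one marked appearance of $w$. Witnessing $(P2)$ for $g_i$ is simpler: near each point of an auxiliary sparse net one reserves one edge colored $A$ whose $g_i$-translate is colored $F$. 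All remaining edges are filled with the color $F$, which is free of any $3$-properness constraint.

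The main obstacle is packing these features densely in each large ball without collisions---two gadgets may not share an edge, the $D/E$ markings of distinct gadgets must not clash, and the $(P2)$-witnesses must stay clear of gadget supports---and this is where the hypothesis that $G$ is not virtually $\mathbf{Z}$ enters. In a virtually cyclic group the Cayley graph is quasi-isometric to a line: simple paths of growing length are forced to be essentially geodesic, their closed $1$-neighborhoods fill balls in a rigid linear fashion, and the resulting gadget placements cannot simultaneously satisfy the non-periodicity requirement $(P2)$ (which would contradict Juschenko--Monod). Since $G$ is finitely generated, infinite, and not virtually $\mathbf{Z}$, its Cayley graph has strictly super-linear growth (groups of linear growth are virtually $\mathbf{Z}$), and so for every $k$ and every sufficiently large $N$ the ball $B_N(e)$ contains more than linearly many pairwise $k$-separated points. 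One can therefore lay out, at stage $n$, one non-overlapping gadget per active density requirement and still leave ample room for the $(P2)$-witnesses, which makes the priority/diagonal scheme at stage $n$ feasible. The extracted limit $\sigma$ then satisfies both $(P1)$ and $(P2)$.
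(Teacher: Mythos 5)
Your overall architecture matches the paper's: enumerate the words and group elements, realize each marked occurrence of $w$ as a path whose non-path edges at the start are colored $D$ and at the other vertices $E$, fill the rest with $F$, and extract a limit in the compact space $\Sigma_G$. But the proof collapses exactly at the step you describe as ``the main obstacle.'' Property $(P1)$ demands that \emph{every} ball of radius $R_\sigma(w_i)$ --- a radius fixed once and for all, depending only on $i$ --- contain a marked copy of $w_i$. Some of these balls sit right on top of the arbitrarily long corridors occupied by other words $w_j$ (in the paper each $w_j$ is written along a geodesic from $t$ to $g_j\cdot t$, and $\mathrm{length}(g_j)$ is unbounded). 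So the real difficulty is local and uniform: given any vertex $g$ lying near such a geodesic $\ell$, you must find, within a \emph{bounded} distance of $g$, a site whose whole neighborhood avoids $\ell$ and every other previously placed feature. Your appeal to super-linear growth --- ``$B_N(e)$ contains more than linearly many pairwise $k$-separated points'' --- is a global counting statement about large balls and does not deliver this uniform local escape. The paper isolates the needed fact as a separate lemma (Lemma \ref{le:coloring}): for every $n$ there is $K(n)$ such that from any point of any geodesic one can reach, within distance $K(n)$, a vertex at distance $n$ from the geodesic; its proof is indeed a growth/compactness argument (an $n$-neighborhood of a geodesic covering all balls forces linear growth, hence virtually $\mathbf{Z}$), but the quantifiers are essential and you have neither stated nor proved the statement in this form.

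Even granting the escape lemma, your scheme is missing the bookkeeping that makes the packing consistent. The paper processes the words from the longest index down to the shortest, surrounds each placed copy of $w_j$ with a protective ball of radius $R_j'$ that is kept free of all other features, and places the copies of $w_i$ at a \emph{maximal} $2R_i'$-separated net $T$ avoiding those protective balls; maximality is what converts ``there exists a free site near $y$'' into ``some point of $T$ lies within $4R_i'$ of $y$,'' and the nesting $R_j' \ge 2R_{i}'+K(2R_{i}'+1)$ guarantees that after stepping off the geodesic of $w_j$ you are still inside its protective ball and hence clear of everything else. Your ``one non-overlapping gadget per active density requirement'' does not explain how density is achieved inside the long corridors, nor why the chosen sites exist uniformly. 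Finally, two smaller points: the paper couples the $(P2)$-witness to the word placement (a $D$-edge at $t$ whose $g_i$-translate is relabeled $E$), which requires $\mathrm{length}(w_i)<\mathrm{length}(g_i)$ in the enumeration; your separate $A$/$F$ witnesses on an auxiliary net can be made to work but need the same collision analysis, and your parenthetical ``which would contradict Juschenko--Monod'' is not an argument that can be used inside the construction.
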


\begin{lemma}\label{le:coloring}
Let $G=\langle S\rangle$ be infinite but not virtually $\mathbf{Z}$. Then for any $n\in \mathbf{N}$, there exists an integer $K(n)$, such that whenever $g\in G$ and $\ell$ is a (finite or infinite) geodesic segment through $g$ in $\mathrm{Cay}(G,S)$, we can find another vertex $h$ with $\mathrm{d}(\ell,h)=n$ and $\mathrm{d}(g,h)\leq K(n)$.
\end{lemma}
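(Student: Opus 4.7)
The plan is to argue by contradiction: if no uniform bound $K(n)$ existed, one could extract a limiting geodesic from which the entire Cayley graph stays at uniformly bounded distance, forcing $G$ to be virtually $\mathbf{Z}$.

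Suppose $K(n)$ does not exist. Then for every $k\in\mathbf{N}$ there are $g_k\in G$ and a geodesic $\ell_k$ through $g_k$ such that no vertex of $B_k(g_k)$ lies at distance exactly $n$ from $\ell_k$. Since $\mathrm{Cay}(G,S)$ is vertex-transitive, left-translate by $g_k^{-1}$ so that $g_k=e$ for all $k$. I first show that in fact every vertex of $B_k(e)$ lies within distance $n-1$ of $\ell_k$: given any such vertex $v$, take a geodesic $v=u_0,u_1,\dots,u_r=e$; all its vertices are still in $B_k(e)$, and the integer function $i\mapsto \mathrm{d}(u_i,\ell_k)$ changes by at most one per step, starting at $\mathrm{d}(v,\ell_k)$ and ending at $0$ since $e\in\ell_k$. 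If $\mathrm{d}(v,\ell_k)\geq n$ the value $n$ would be attained along the way, contradicting the choice of $\ell_k$.

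Next, extract a limit geodesic by a standard diagonal compactness argument: for each radius $R$ the intersection $\ell_k\cap B_R(e)$ can take only finitely many values, so after passing to a subsequence we may assume that these intersections eventually stabilize for every $R$, yielding a limiting geodesic $\ell_\infty$ through $e$ (which might be a finite segment, a one-sided ray, or bi-infinite). For any $v\in G$, choosing $k$ large enough that $v\in B_k(e)$ and $\ell_k\cap B_{R+n}(e)=\ell_\infty\cap B_{R+n}(e)$ for some suitable $R$, the estimate above gives $\mathrm{d}(v,\ell_\infty)\leq n-1$. Hence $G\subseteq B_{n-1}(\ell_\infty)$.

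The final step is to derive the contradiction. The geodesic $\ell_\infty$ cannot be finite, for then $G$ would lie in a bounded neighborhood of a bounded set and thus be finite. So $\ell_\infty$ is infinite (in at least one direction), and since balls of radius $n-1$ in $\mathrm{Cay}(G,S)$ have cardinality bounded by a constant $C=C(n,S)$, the inclusion $G\subseteq B_{n-1}(\ell_\infty)$ forces $|B_R(e)|\leq C(R+n)$, so $G$ has linear growth. By Gromov's polynomial growth theorem, $G$ is virtually nilpotent, and a virtually nilpotent group of linear growth is virtually $\mathbf{Z}$, contradicting the hypothesis on $G$. The main delicacy is the diagonalization — making sure $\ell_\infty$ is a genuine geodesic through $e$ and that the distance bound passes to the limit — but this is routine once the local estimate of the second paragraph is secured.
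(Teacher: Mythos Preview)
Your proof is correct and follows essentially the same route as the paper's: assume failure, extract (by local finiteness/diagonalization) a limiting geodesic through $e$ whose bounded neighborhood covers the whole Cayley graph, deduce linear growth, and conclude that $G$ is virtually $\mathbf{Z}$. The only notable difference is that you invoke Gromov's polynomial growth theorem for the last step, whereas the paper cites the much more elementary fact (due to Justin~\cite{Jus71}) that a finitely generated group of linear growth is already finite or virtually~$\mathbf{Z}$ --- your argument works, but the heavy machinery is unnecessary here.
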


\begin{proof}
We can translate the vertex $g$ to the identity element $e\in G$, so it is enough to prove the statement for $g=e$. Assume that for some $n\in \mathbf{N}$ and every $k\in \mathbf{N}$, there exists a geodesic segment $\ell_k$ through $e$ with no suitable $h$. This means that the $n$-neighborhood of $\ell_k$ covers the $k$-ball around $e$. 

By local finiteness there exists a (finite or infinite) geodesic $\ell$ through $e$, such that the $n$-neighborhood of $\ell$ covers the whole Cayley graph. 

We will estimate the growth of the Cayley graph. Let $k\in \mathbf{N}$, $k>n$, and look at $B_{k+1}(e)\setminus B_k(e)$. For $s\in \mathbf{N}$ let $\ell^{s}= \ell \cap B_{s}(e)$.
By the triangle inequality we have the following inclusions:
\[B_n(\ell^{k-n})\subseteq B_k(e) \subseteq B_{k+1}(e) \subseteq B_n(\ell^{k+n+1}).\]
Hence we have
\[B_{k+1}(e)\setminus B_k(e)\subseteq B_n(\ell^{k+n+1})\setminus B_n(\ell^{k-n}) \subseteq B_n(\ell^{k+n+1}\setminus \ell^{k-n}).\]
The set $\ell^{k+n+1}\setminus \ell^{k-n}$ has size at most $4n+2$, so the size of its $n$-neighborhood is bounded above by $(4n+2) |B_n(e)|$. This number does not depend on $k$, hence the size of $B_{k+1}(e) \setminus B_k(e)$ is bounded by a constant. Therefore the growth of the Cayley graph is at most linear, implying that the group is either finite or virtually $\mathbf{Z}$ (by \cite{Jus71}). This contradicts the assumption of the lemma, concluding the proof.
\end{proof}

\begin{proof}[Proof of Proposition \ref{prop:coloring2}]
We enumerate the words $\Delta=\{w_1,w_2,w_3,\dots \}$ and the group elements $G\setminus \{e\}=\{g_1,g_2,g_3,\dots\}$ such that for every $i\geq 1$ we have $\mathrm{length}(w_i) < \mathrm{length}(g_i)$ (we allow repetitions). We will fix ranges $\{R_i\}_{i\geq 1}$, and for every $k$, we are going to construct a coloring $\sigma_k\in \Sigma_k$ that satisfies the requirements of $(P1)$ with respect to the first $k$ words in $\Delta$, with $R_{\sigma_k}(w_i)=R_i$ for each $i\leq k$, and the requirements of $(P2)$ for the first $k$ elements of $G$, with $N_{\sigma_k}(g_i)=R_i$ for all $i\leq k$. Then we take $\sigma$ to be the limit point of a convergent subsequence of $\{\sigma_k\}_{k\geq 1}$. This $\sigma$ has property $(P1)$ and $(P2)$ with $R_{\sigma}(w_i)=R_i$ and $N_{\sigma}(g_i)=R_i$ for every $i\geq 1$.

For each $i\geq 1$, we define $R_i$ and an auxiliary range $R_i'$ the following way. Let 
$R_1'=\mathrm{length}(g_1)+2$ and
\begin{align*}
R_i' & = \max \{\mathrm{length}(g_i)+2, 2R_{i-1}'+K(2R_{i-1}'+1) \} \quad \text{for } i\geq 2,\\
R_i & = 6R_i'+K(2R_i'+1)+\mathrm{length}(g_i)+1 \quad \text{for } i\geq 1,
\end{align*}
where $K$ is the function from Lemma \ref{le:coloring}.

Now let us fix $k\in \mathbf{N}$, the construction of $\sigma_k$ is as follows. The variable $i$ will take the values $k,k-1,\dots,2,1$ successively, and for every $i$ we place copies of $w_i$, and also label additional edges that ensure property $(P2)$ for $g_i$. In each step the following three conditions will hold.
\begin{enumerate}[noitemsep, label=(\arabic*)]
\item In every $R_i$-ball we can see $w_i$ written along a path with the starting point marked with $D$, and other points marked with $E$.
\item In every $R_i$-ball there is an edge $(x,y)$ with label $D$, such that $g_i\cdot (x,y)$ has label $E$.
\item If $g$ is a vertex in a copy of the word $w_i$, then the $R_i'$-ball around $g$ does not intersect any copy of a word $w_j$ (other than the one containing $g$) for $i\leq j$ or labeled edges belonging to $g_j$ for $i < j$.
\end{enumerate}
In the $j$th step (when the value of $i$ is $k+1-j$) we take a maximal set of points $T\subset G$, such that the $2R_i'$-balls around the points of $T$
\begin{itemize}[nolistsep]
\item do not intersect each other,
\item do not intersect any previously placed copy of a word $w_j$ with $i<j$,
\item do not contain previously labeled edges belonging to $g_j$ with $i < j$.
\end{itemize}
Then for every point $t$ in $T$, we take a geodesic segment starting from $t$ and ending at $g_i\cdot t$, write $w_i$ along this segment, and mark the starting point with the color $D$, all other points with $E$. (Here we use that $\mathrm{length}(w_i) < \mathrm{length}(g_i)$.) Now for each $t\in T$, pick an edge adjacent to $t$ that is labeled by $D$, and label the $g_i$-translate of this edge by $E$, these will be the labeled edges belonging to $g_i$. This way condition (3) is immediately satisfied, since $R_i'\geq \mathrm{length}(g_i)+2$.

Take an arbitrary vertex $g\in G$, we would like to find a copy of $w_i$ and a labeled edge pair belonging to $g_i$ in the ball $B_{R_i}(g)$. If $g\in T$, then the first condition clearly holds, so assume that $g\notin T$. The reason why $g$ cannot be added to $T$ is that the $2R_i'$-ball around $g$ intersects the $2R_i'$-ball around a point of $T$, or it intersects a copy of another word $w_j$ with $i\leq j$, or a labeled edge belonging to $g_j$ with $i\leq j$.
\begin{itemize}
\item In the first case we have $p\in T$ such that $B_{2R_i'}(g)\cap B_{2R_i'}(p)\neq \varnothing$. This means that $\mathrm{d}(g,p)\leq 4R_i'$, hence the $R_i$-ball around $g$ contains the copy of $w_i$ starting from $p$, a $D$-labeled edge from $p$ and its $g_i$-translate labeled by $E$.

\item In the second case there is a point $x\in B_{2R_i'}(g)$ belonging to a copy of the word $w_j$ or an endpoint of a labeled edge belonging to $g_j$ (so $x$ is the $g_j$-translate of the starting point of a copy of $w_j$). In both cases we can consider the geodesic segment $\ell$ on which $w_j$ is written, we have $x\in \ell$. By Lemma \ref{le:coloring}, there is a point $y\in G$, such that $\mathrm{d}(y,\ell)=2R_i'+1$ and $\mathrm{d}(y,x)\leq K(2R_i'+1)$. This way the ball $B_{2R_i'}(y)$ does not intersect the geodesic $\ell$ and lies in the protective ball $B_{R_j'}(x)$ (since $i < j$), so it does not intersect any other previously placed word or labeled edge either. Hence there exists a point $q\in T\cap B_{4R_i'}(y)$, otherwise $y$ could be added to $T$ to enlarge it (possibly $y=q$). We have that 
\[\mathrm{d}(g,q)\leq \mathrm{d}(g,x)+\mathrm{d}(x,y)+\mathrm{d}(y,q)\leq \]
\[\leq 2R_i'+K(2R_i'+1)+4R_i' \leq R_i-\mathrm{length}(g_i)-1,\]
so the ball $B_{R_i}(g)$ contains a copy of $w_i$, and also an edge adjacent to $q$ labeled by $D$ and its $g_i$-translate labeled by $E$.
\end{itemize}

In both cases we can see that conditions (1) and (2) are satisfied, so we can continue the coloring. After placing all the words we label the remaining edges with the color $F$, concluding the construction of $\sigma_k$.
\end{proof}

\begin{proof}[Proof of Theorem \ref{thm:B}]
Let $S$ be a symmetric generating set for $G$. By Proposition \ref{prop:coloring2} there exists a coloring in $\Sigma_G$ with property $(P1)$ and $(P2)$. Hence, Proposition \ref{prop:coloring1} implies the statement of the theorem.
\end{proof}

\bibliographystyle{plain}
\bibliography{mybib}

\end{document}